\documentclass[12pt]{article}
\usepackage{setting0727}
\usepackage{needspace}

\renewcommand{\OpenworkHeaderText}{Zhengxu Jiang, Jiaao Li, B-coloring of grid graphs, }
\title{B-coloring of grid graphs}
\author[1]{Zhengxu Jiang}
\author[1]{Jiaao Li}
\affil[1]{School of Mathematical Sciences and LPMC, Nankai University, Tianjin 300071, China}
\date{}
\hypersetup{
  pdftitle={B-coloring of grid graphs},
  pdfauthor={Zhengxu Jiang and Jiaao Li},
  pdfkeywords={B-coloring, grid graph, cylindrical grid, torus grid, rainbow 4-cycle}
}

\newcommand{\Z}{\mathbb Z}

\definecolor{BcolOne}{HTML}{E6194B}
\definecolor{BcolTwo}{HTML}{0066FF}
\definecolor{BcolThree}{HTML}{00A651}
\definecolor{BcolFour}{HTML}{FF8C00}
\definecolor{BcolFive}{HTML}{B517E8}
\definecolor{BcolSix}{HTML}{00AFC1}
\tikzset{
  bvertex/.style={circle,fill=black,draw=black,line width=.35pt,inner sep=1.55pt},
  edgecode/.style={font=\scriptsize,fill=white,fill opacity=1,text opacity=1,inner sep=.45pt,text=black},
  bedge1/.style={draw=BcolOne,line width=.95pt},
  bedge2/.style={draw=BcolTwo,line width=.95pt},
  bedge3/.style={draw=BcolThree,line width=.95pt},
  bedge4/.style={draw=BcolFour,line width=.95pt},
  bedge5/.style={draw=BcolFive,line width=.95pt},
  bedge6/.style={draw=BcolSix,line width=.95pt}
}

\begin{document}
\maketitle

\begin{abstract}
A B-coloring of a graph $G$ is a proper edge-coloring in which every $4$-cycle is rainbow. Let $q_B(G)$ be the minimum number of colors in such a coloring. Gy\'arf\'as and S\'ark\"ozy (2023) determine $q_B(G)$ when $G=P_m\square P_n$ is a rectangular grid. In this paper, we completely determine $q_B(G)$ for cylindrical and torus grid graphs $G$. For a torus grid $G=C_m\square C_n$, where $m,n\ge3$ are integers, we prove that $q_B(G)=4=\Delta(G)$ if both $m$ and $n$ are even and $G\not\cong C_4\square C_{4k+2}$ for any integer $k\ge1$, and that $q_B(G)=5=\Delta(G)+1$ if at least one of $m,n$ is odd or $G\cong C_4\square C_{4k+2}$ for some integer $k\ge1$. For a cylindrical grid $G=C_s\square P_m$, $q_B(G)$ also depends on the parity of $s$ and the length of $P_m$. For integers $m\ge2$ and $n\ge2$, we have $q_B(C_{2n}\square P_m)=4$. For integers $m\ge2$ and $n\ge1$, we have $q_B(C_{2n+1}\square P_m)=4$ if $2\le m\le n$, whereas $q_B(C_{2n+1}\square P_m)=5$ if $m\ge n+1$. In higher dimensions, we discuss the B-coloring of discrete torus and $\ell$-cylindrical grid, obtaining some exact results and certain bounds.

\medskip
\noindent\textbf{Keywords.} B-coloring; grid graph; cylindrical grid; torus grid; rainbow $4$-cycle.
\end{abstract}

\section{Introduction}
All graphs in this paper are finite and simple. An edge-coloring is \emph{proper} if incident edges receive distinct colors, and a set of edges is \emph{rainbow} if its edges have pairwise distinct colors. A \emph{B-coloring} of a graph $G$ is a proper edge-coloring in which every $4$-cycle is rainbow, and its \emph{B-coloring number} $q_B(G)$ is the minimum number of colors in such a coloring. The properness condition naturally gives $q_B(G)\ge\Delta(G)$, where $\Delta(G)$ is the maximum degree of $G$. Analogously to Vizing's Theorem, we ask when a grid graph attains this bound and when more colors are forced.

Denote $V(G)$ and $E(G)$ to be the vertex and edge sets of a graph $G$. The Cartesian product $G\square H$ has vertex set $V(G)\times V(H)$, with $(u,x)$ adjacent to $(v,y)$ if and only if $uv\in E(G)$ and $x=y$, or $u=v$ and $xy\in E(H)$. Let $P_n$ and $C_n$ denote the path and cycle on $n$ vertices. The products $P_m\square P_n$, $P_m\square C_n$, and $C_m\square C_n$ are the \emph{rectangular}, \emph{cylindrical}, and \emph{torus grids}; passing from paths to cycles closes coordinate directions of the grid.

The requirement that every $4$-cycle be rainbow first appeared in the study of hypercubes~\cite{FaudreeGyarfasLesniakSchelp1993} and was later extended to graph products~\cite{FaudreeGyarfasSchelp1996}. Gy\'arf\'as and S\'ark\"ozy~\cite{GyarfasSarkozyLess2023} subsequently imposed properness, introduced B-coloring, and connected it with the Brown--Erd\H{o}s--S\'os $(7,4)$-conjecture~\cite{BrownErdosSos1973}. The parameter belongs to a broader line of questions about forcing many colors on small subgraphs, including the Ramsey-type problem of Erd\H{o}s and Gy\'arf\'as~\cite{ErdosGyarfas1997}. Star edge coloring is especially close: it is also proper and forbids bichromatic paths and cycles of length four~\cite{DvorakMoharSamal2013}.

Subsequent work has asked how closely $q_B(G)$ follows the degree lower bound. For planar graphs, the current general estimate is $q_B(G)\le2\Delta(G)+6$, and it sharpens to $q_B(G)\le2\Delta(G)$ when $\Delta(G)\ge38$; for outerplanar graphs, $q_B(G)=\Delta(G)$ when $\Delta(G)\ge7$~\cite{GyarfasMartinRuszinkoSarkozy2024,KongWangZheng2026}. Recent results on maximal planar, subcubic, and regular bipartite graphs likewise study the relation between the B-coloring number and the maximum degree $\Delta(G)$~\cite{ChenWangXu2026,XueHuKong2026,GyarfasSarkozyWagner2026}. Cartesian products make this comparison especially concrete because their factor directions determine both the maximum degree and the coordinate $4$-cycles.

Gy\'arf\'as and S\'ark\"ozy~\cite{GyarfasSarkozy2023} began the exact study of Cartesian products of paths and cycles by classifying all path products. For a graph $G$ and a positive integer $d$, let $G^d$ denote the product of $d$ copies of $G$, and let $Q_d=P_2^d$.

\begin{theorem}[Gy\'arf\'as and S\'ark\"ozy~\cite{GyarfasSarkozy2023}]
\label{thm:GS-path-products}
Let $G$ be a Cartesian product of paths. Then $q_B(G)=\Delta(G)$ unless $G$ is isomorphic to $P_2\square P_n$ for some integer $n\ge2$, $Q_3$, or $Q_5$. In the exceptional cases,
\[
q_B(P_2\square P_n)=4,\qquad q_B(Q_3)=4,\qquad q_B(Q_5)=6.
\]
\end{theorem}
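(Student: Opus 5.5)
The plan is to prove the upper bound $q_B(G)\le\Delta(G)$ by an explicit product construction for all non-exceptional cases, and then handle the exceptional graphs separately: lower bounds by local parity or counting arguments, upper bounds by explicit colorings.

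\textbf{Base cases.} Write $G=P_{n_1}\square\cdots\square P_{n_d}$ and discard factors $P_1$. For $d=1$ the graph is a path, and $q_B=\Delta$ is trivial. For $d=2$ with $n_1,n_2\ge3$, I would color the horizontal edge $(i,j)(i+1,j)$ with $i+2j \pmod 4$-type data chosen from $\{1,2\}$ or $\{3,4\}$, and the vertical edges symmetrically, so that in every unit square the two horizontal edges get the two distinct colors of one pair and the two vertical edges the two colors of the other. A concrete choice is:
\begin{itemize}
\item horizontal edges get $1$ or $2$ according to the parity of $i+\lfloor j/\cdot\rfloor$;
\item vertical edges get $3$ or $4$ according to $j+\lfloor i/\cdot\rfloor$.
\end{itemize}
This already needs care, since properness forces the color to alternate along a row while rainbowness forces it to differ between rows. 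Coloring horizontal edges by $i+j\bmod 2$ and vertical edges by $i+j\bmod 2$ (shifted to $\{3,4\}$) works: the two horizontal edges of a square have $i+j$ values of different parity. Boundary vertices of degree $<4$ only help, so $\Delta=4$ colors suffice whenever $\Delta=4$, that is, when both $n_i\ge3$. When one factor is $P_2$ we have $\Delta=3$, and this is the exceptional case $P_2\square P_n$.

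\textbf{Higher dimensions.} I would induct on $d$ by splitting off a factor. For coordinate-direction edges I would use colors depending on direction and on parity of the coordinate sum, giving two colors per direction with a long path ($n_i\ge3$) and one color for a $P_2$ direction. A 4-cycle in a product of paths always spans exactly two directions, and the parity trick makes it rainbow. The count is $\sum_i \min(2,n_i-1)$, which equals $\Delta(G)$ exactly. The difficulty is that two $P_2$ factors $a,b$ give a square whose opposite edges share a color. So the real work is recoloring the $Q_k$-part when there are $k\ge2$ factors $P_2$: I need a B-coloring of $Q_k\square H$ with $k+\Delta(H)$ colors. Here I would use the known structure: $Q_k$ has a B-coloring with $k$ colors iff it admits a proper coloring with every 4-cycle rainbow. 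For $Q_2=C_4$ this holds trivially, and for $Q_4$ one takes a linear map $\mathbb F_2^4\to$ colors built from a suitable code. I would build $k$-colorings of $Q_k$ for $k\ne3,5$ via $Q_{k}=Q_{k_1}\square Q_{k_2}$ with disjoint palettes and small bases $Q_2,Q_4,Q_6,Q_7$ found by hand or computer. Combining a long-path factor with $Q_3$ or $Q_5$ should absorb the deficiency by borrowing a color from the path direction's two-color scheme.

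\textbf{Exceptions and main obstacle.} For the exceptional graphs, $P_2\square P_n$ with $n\ge3$ contains two adjacent squares. With 3 colors, properness at a degree-3 middle vertex forces a repeat in one of the squares, so $q_B=4$; a 4-coloring exists easily. For $Q_3$, a 3-coloring must be a 1-factorization with each color class a perfect matching, and every face of the cube then repeats a color, giving $q_B=4$. The hardest step is $q_B(Q_5)=6$: the lower bound requires showing that no 5-coloring exists. I would attempt this by showing that each color class must be a perfect matching of 16 edges, then analyze how matchings interact with the 80 squares, each color meeting each square at most once. Counting squares via pairs of edges gives $\binom{5}{2}\cdot 8\cdot 2$ incidences, and I expect this to need a case analysis or computer check. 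The mixed constructions with $Q_3$ and $Q_5$ factors are a secondary obstacle.
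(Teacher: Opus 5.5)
The paper does not prove this statement; it is quoted from Gy\'arf\'as and S\'ark\"ozy and used as a black box. Your proposal therefore has to stand on its own, and as written it has genuine gaps.

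\textbf{The construction fails whenever there is a $P_2$ factor.} Your direction-parity coloring is correct for products of paths that all have at least three vertices. It breaks as soon as there is a \emph{single} $P_2$ factor, not only two. If the $P_2$ direction $k$ carries one color, then every square spanned by $k$ and another direction has both of its $k$-edges in that color. In $P_2\square H$ the rung colors must form a proper vertex coloring of $H$, since rungs over adjacent vertices lie in a common square. Moreover, at a vertex of degree $\Delta(G)$ all $\Delta(G)$ colors already occur, so the rungs must share palettes with the other directions. Nothing in your plan does this, so graphs such as $P_2\square P_m\square P_n$ with $m,n\ge3$ are not covered. The paper itself leaves the closely related $P_2\square P_m\square C_{2n}$ open in Theorem~\ref{thm:path-even-cycle-products}(3c), a sign that this case needs a genuinely new construction.

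\textbf{The claim about $C_4$ is false.} You assert that $Q_2=C_4$ trivially has a $2$-color B-coloring, but its unique $4$-cycle must be rainbow, so $q_B(C_4)=4$. Consequently $C_4\square H$ with $H$ a product of longer paths (e.g.\ $P_2\square P_2\square P_n$) is unhandled. Also, disjoint-palette products of $Q_4,Q_6,Q_7$ never yield $Q_9$.

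\textbf{$Q_3\square H$ and $Q_5\square H$ are not constructed.} ``Borrowing a color'' is not a construction. The available tool is the special $Q_3$ product lemma (Lemma~\ref{lem:Q3-product}). Writing $Q_5\square H=Q_3\square(C_4\square H)$ then depends on the $C_4\square H$ case, which you have not done.

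\textbf{The lower bound $q_B(Q_5)\ge6$ is missing.} This is the real content of the exceptional list. Observing that each color class of a $5$-coloring is a perfect matching and counting square incidences gives no contradiction by itself. Deferring to ``case analysis or computer check'' leaves this (the Faudree--Gy\'arf\'as--Lesniak--Schelp theorem) unproved.

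By contrast, several pieces are easy:
\begin{enumerate}
\item The lower bound $4$ for $P_2\square P_n$ and $Q_3$ is immediate, because the graph contains a $4$-cycle.
\item The upper bounds $q_B(P_2\square P_n)\le4$ and $q_B(Q_5)\le6$ follow from Lemma~\ref{lem:cross-inequality} by giving the $P_2$ factor a two-color palette, using a four-coloring of $Q_4$ in the latter case.
\end{enumerate}
However, $q_B(Q_3)\le4$ and the $k$-colorings of $Q_4,Q_6,Q_7$ are asserted rather than constructed.
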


Apart from three low-dimensional exceptions, this path-product theorem shows that products of paths attain the degree lower bound $\Delta(G)$. Closing a path direction into a cycle requires the coloring to close consistently as well, introducing the parity and divisibility constraints below.

A product of cycles is a \emph{discrete torus}, and it is \emph{even} when every factor is even. For an integer $d\ge1$, an even discrete torus with factor lengths $2n_1\le\cdots\le2n_d$, where the $n_i\ge2$ are integers, is \emph{exceptional}~\cite{GyarfasSarkozy2023} if $n_1=2$ and $2n_i\equiv2\pmod4$ for $i=2,\ldots,d$. Equivalently, it has the form $C_4\square C_{2n_2}\square\cdots\square C_{2n_d}$ with $n_2,\ldots,n_d$ odd. The known even-cycle result is the following.

\begin{theorem}[Gy\'arf\'as and S\'ark\"ozy~\cite{GyarfasSarkozy2023}]
\label{thm:GS-even-products}
Let $d\ge1$ be an integer, and let $2n_1\le\cdots\le2n_d$ be the factor lengths of an even discrete torus, where the $n_i\ge2$ are integers. If this discrete torus is non-exceptional, then
\[
q_B(C_{2n_1}\square\cdots\square C_{2n_d})=2d.
\]
If it is exceptional and $d\ge2$, then
\[
2d\le q_B(C_{2n_1}\square\cdots\square C_{2n_d})\le2d+1.
\]
Moreover, $q_B(C_4)=4$ and $q_B(C_4\square C_{4k+2})=5$ for every integer $k\ge1$.
\end{theorem}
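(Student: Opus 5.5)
The plan is to prove each part of Theorem~\ref{thm:GS-even-products} by pairing a degree lower bound with an explicit coloring, and to treat the two stated exact values for exceptional tori separately.

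\textbf{Lower bound.} The graph $C_{2n_1}\square\cdots\square C_{2n_d}$ is $2d$-regular, and properness alone gives $q_B\ge\Delta=2d$. This bound is used in every case.

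\textbf{Non-exceptional upper bound.} We would color each direction $i$ with its own pair of colors $\{a_i,b_i\}$. A $4$-cycle in a product of cycles of length at least $4$ is either a coordinate square using two directions $i\ne j$, or, when some $2n_i=4$, the factor cycle $C_4$ itself.
\begin{itemize}
\item For a square in directions $i\ne j$, the two $i$-edges must get both $a_i$ and $b_i$, and likewise for the $j$-edges.
\item Write the $i$-edge from $x$ to $x+e_i$ as colored $a_i$ or $b_i$ according to a function $f_i(x)\in\{0,1\}$.
\item Properness along the $i$-cycle means $f_i$ alternates in $x_i$, so $f_i(x)=x_i+g_i(\hat x_i)\pmod 2$.
\item The square condition means $f_i(x+e_j)\ne f_i(x)$, so $f_i$ also alternates in every other coordinate $x_j$.
\end{itemize}
The natural choice $f_i(x)=\sum_k x_k \pmod 2$ works whenever every length is even, since the sum is well defined mod $2$ and closes consistently. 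This gives $2d$ colors for every even torus, with one gap: a factor $C_4$ is itself a $4$-cycle whose edges then carry only two colors.

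\textbf{Factors equal to $C_4$.} Non-exceptional tori with a $C_4$ factor need a different construction, since each $C_4$ fiber must become rainbow.
\begin{itemize}
\item If $d\ge2$ and some other length is divisible by $4$, we would pair that $C_4$ direction $i$ with such a direction $j$. The $4$ colors of directions $i,j$ are then used jointly, indexed by $(x_i\bmod 4,\,x_j\bmod 4)$ via a Latin-square-type pattern.
\item The pattern must make every $i$-fiber rainbow, keep the $j$-edges proper, and make every $ij$-square rainbow. The $j$-period $4$ has to divide $2n_j$, which is exactly what fails in the exceptional case.
\item Several $C_4$ factors can be paired among themselves.
\item Squares mixing these paired directions with other directions stay rainbow via the parity construction above, with the $4$-color block playing the role of two pairs.
\end{itemize}
Finding this local $4\times4$ pattern and verifying the mixed squares is \textbf{the main obstacle}. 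I would search for a pattern given by a bijection $\mathbb Z_4\times\mathbb Z_4\to$ colors of affine form and check the cases by hand.

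\textbf{Exceptional tori.} For the upper bound $2d+1$, a single extra color suffices to fix the $C_4$ direction. One option is to color the $C_4\square C_{4k+2}$ layer with $5$ colors (as in the $q_B=5$ case) and keep the remaining directions on the parity scheme. The value $q_B(C_4)=4$ is immediate.

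\textbf{Lower bound $q_B(C_4\square C_{4k+2})\ge5$.} Suppose there were a $4$-coloring, with $4$ colors at a $4$-regular vertex.
\begin{itemize}
\item Every vertex sees all $4$ colors, and each $C_4$ fiber is rainbow.
\item The rainbow condition on consecutive squares forces the coloring of the $C_4$ fibers to be transported with period $4$ along the $C_{4k+2}$ direction.
\item Since $4\nmid 4k+2$, the pattern cannot close up, which gives a contradiction.
\end{itemize}
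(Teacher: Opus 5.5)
The paper does not prove this theorem; it quotes it from Gy\'arf\'as and S\'ark\"ozy, so your argument has to stand on its own. It has two genuine gaps.

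\textbf{What already works.} The parity coloring handles every torus without a $C_4$ factor. Your ``main obstacle'' is easy: $h=x_i+x_j$, $v=x_i+x_j+2\pmod 4$ is a four-color B-coloring of $C_4\square C_{4m}$. Adding $2\pi(x)\pmod 4$ to it, where $\pi(x)$ is the parity of the remaining coordinates, makes every mixed square rainbow; this is the shift in Lemma~\ref{lem:cross-inequality}. Your lower bound for $C_4\square C_{4k+2}$ is also right once the transport is made precise. If a fiber has cyclic colors $c_0,\dots,c_3$, the vertex and square conditions force the vertical colors above it to be $c_{a+1}$ for every $a$, or $c_{a+2}$ for every $a$. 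So the next fiber is a rotation by $\pm1$, the vertex condition keeps the sign fixed, and closing up requires $4\mid 4k+2$.

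\textbf{First gap: the pairing scheme misses some non-exceptional tori.} Your scheme gives each block of directions its own palette. A block containing a $C_4$ direction needs at least four colors, hence at least two directions, and you pair each $C_4$ with a partner whose length is divisible by $4$. This fails when the number of $C_4$ factors is odd and at least three and every other length is $\equiv2\pmod4$. Examples are $C_4\square C_4\square C_4\cong Q_6$ and $C_4^3\square C_6$, both non-exceptional. One $C_4$ is then left over. Pairing it with a $C_{4m+2}$ is exactly what your own lower bound forbids, and giving it two colors of its own leaves its fibers bichromatic. Within your framework these tori need a block of three $C_4$ directions, i.e.\ a six-color B-coloring of $Q_6$. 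That comes from the hypercube result (Theorem~\ref{thm:GS-path-products}, since $Q_6$ is not an exception), not from any pairing construction.

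\textbf{Second gap: no five-coloring of $C_4\square C_{4k+2}$.} The phrase ``as in the $q_B=5$ case'' points to a case where you only proved the lower bound. So both $q_B(C_4\square C_{4k+2})\le5$ and the bound $2d+1$ for exceptional tori with $d\ge2$ are unsupported. This is not a formality: your own analysis shows that with four colors every step is a rotation. The fifth color must therefore build a seam that changes the net rotation, and that seam has to be exhibited and checked. One way is to take the five-coloring of $C_4\square C_{2k+1}$ from Lemma~\ref{lem:C4-odd-cycle} and repeat it twice along the odd direction by Lemma~\ref{lem:cover-pullback}. With that block available, your extension to $d\ge3$ works if you shift its colors cyclically modulo $5$ according to the parity of the remaining coordinates.
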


Since a $d$-dimensional discrete torus is $2d$-regular, Theorem~\ref{thm:GS-even-products} shows that the non-exceptional case attains the degree lower bound. The exceptional values were known for $d=1$ and $d=2$, but not in higher dimensions. Products of odd cycles require at least $2d+1$ colors even for proper edge-coloring; the known bounds are as follows.

\begin{theorem}[Gy\'arf\'as and S\'ark\"ozy~\cite{GyarfasSarkozy2023}]
\label{thm:GS-odd-products}
Let $d\ge1$ and $n_1,\ldots,n_d\ge1$ be integers, and set $G=C_{2n_1+1}\square\cdots\square C_{2n_d+1}$. Then $2d+1\le q_B(G)\le3d$. If every factor length is divisible by $2d+1$, then $q_B(G)=2d+1$. Furthermore,
\[
2d+1\le q_B(C_3^d)\le
\begin{cases}
5d/2,&d\text{ is even},\\
(5d+1)/2,&d\text{ is odd}.
\end{cases}
\]
\end{theorem}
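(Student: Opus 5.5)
The plan has three parts: a counting argument for the lower bound, an explicit linear coloring for the divisible case, and palette-splitting constructions for the two general upper bounds.

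\emph{Lower bound.} I would use the standard parity argument. Since $G$ is $2d$-regular, each colour class is a matching. $G$ has $\prod_i(2n_i+1)$ vertices, which is odd, so every matching misses at least one vertex and has at most $(|V(G)|-1)/2$ edges. Then $|E(G)|=d\,|V(G)|>2d\cdot(|V(G)|-1)/2$, so $2d$ colours cannot cover all edges even properly. Hence $q_B(G)\ge 2d+1$. The same argument gives the lower bound for $C_3^d$.

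\emph{Divisible case.} I would write $V(G)=\prod_i\Z_{2n_i+1}$ and give the edge from $x$ to $x+e_i$ the colour
\[
\varphi(x)+i \pmod{2d+1},\qquad \varphi(x)=\sum_{k=1}^d 2k\,x_k .
\]
This is well defined because $2d+1$ divides every factor length. At a vertex $x$, the edge to $x+e_i$ gets $\varphi(x)+i$ and the edge to $x-e_i$ gets $\varphi(x)-2i+i=\varphi(x)-i$. For $1\le i\le d$ the residues $\pm i$ are pairwise distinct modulo $2d+1$, so the colouring is proper. Every $4$-cycle uses two directions $i\neq j$, because odd cycles of length at least $3$ contain no $4$-cycle except when the length is $4$, which is impossible here. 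Up to the shift $\varphi(x)$, the cycle $x,x+e_i,x+e_i+e_j,x+e_j$ receives the colours $i,\ j+2i,\ j,\ i+2j$. These are distinct modulo $2d+1$ because $i\ne j$, $i+j\not\equiv0$ and $2i,2j\not\equiv0$. This gives $q_B(G)=2d+1$ in this case.

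\emph{General bound $3d$ and the bound for $C_3^d$.} I would give each direction $i$ its own palette of three colours, which makes all properness and $4$-cycle conditions between different directions automatic. The only remaining requirement is that opposite edges of a square get different colours. These are two parallel copies, in direction $i$, of the same factor edge, differing by one step in some coordinate $j\ne i$. So within direction $i$ I need a proper $3$-colouring of each fibre cycle $C_{2n_i+1}$ that changes on every edge whenever one moves one step in any other coordinate.

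The natural attempt is to colour fibre edge $x_i$ by $c(x_i)$ composed with a colour rotation $\sigma^{t(x)}$, where $t$ depends on the other coordinates. This works immediately when the lengths allow $t$ to be taken modulo $3$, but in general the rotation must close up consistently around an odd cycle of arbitrary length. I expect this periodicity to be the main obstacle. I would first try to handle it by giving each fibre cycle a single ``defect'' edge where a different pair of $3$-colourings is used, and then check the finitely many local conditions there by hand.

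For $C_3^d$ I would group the directions in pairs. I would colour each copy of $C_3\square C_3$ with $5$ colours using the divisible case with $d=2$, and apply the same cross-block twist to avoid equal colours on squares that mix two blocks. A leftover single direction, when $d$ is odd, gets $3$ colours. This gives the bound $5d/2$ for even $d$ and $(5d+1)/2$ for odd $d$.
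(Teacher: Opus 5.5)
The paper only cites this theorem from Gy\'arf\'as and S\'ark\"ozy, so there is no in-paper proof to compare against. Your lower bound and your linear colouring $\varphi(x)+i$ in the divisible case are correct. The $3d$ bound, however, is not actually proved. You correctly reduce it to choosing, in each direction $i$, colours for the edges $x\,(x+e_i)$ from a private $3$-colour palette, proper along each fibre and differing between $x$ and $x+e_k$ for every $k\neq i$. You then treat the closing-up of the rotation $\sigma^{t(x)}$ as an unresolved periodicity problem and defer it to an unspecified defect-edge construction.

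The missing observation is that no periodicity is involved. The condition only asks that $t$ be a proper vertex $3$-colouring of $\prod_{k\ne i}C_{2n_k+1}$, because $\sigma^a(c)\neq\sigma^b(c)$ whenever $a\not\equiv b\pmod 3$. Equivalently, colour $x\,(x+e_i)$ by $(i,\psi(x))$ with $\psi$ any proper $3$-colouring of $G$; then adjacency of $x$ and $x+e_k$ gives every required inequality, including $k=i$. Such a $\psi$ exists for all odd lengths by Theorem~\ref{thm:sabidussi}, explicitly $\psi(x)=\sum_k\psi_k(x_k)\bmod 3$ with each $\psi_k$ a proper $3$-colouring of $C_{2n_k+1}$. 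This is exactly Lemma~\ref{lem:cross-inequality} applied $d-1$ times to $3$-colour B-colourings of odd cycles, and no divisibility by $3$ is needed.

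For $C_3^d$ there is an outright error. The divisible case with $d=2$ requires every factor length to be divisible by $2d+1=5$, and $3$ is not. Your formula $\varphi(x)=2x_1+4x_2\bmod 5$ is not even well defined on $\Z_3^2$, so it gives no five-colouring of $C_3\square C_3$. You need a separate construction. For instance, take the $p=q=3$ instance of the construction in Proposition~\ref{prop:odd-odd-upper}, with $h(i,j),v(i,j)$ as there: $h(0,\cdot)=(3,2,4)$, $h(1,\cdot)=(1,3,5)$, $h(2,\cdot)=(5,1,2)$, $v(0,\cdot)=(4,5,1)$, $v(1,\cdot)=(5,1,2)$, $v(2,\cdot)=(2,4,3)$. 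This is a five-colour B-colouring of $C_3\square C_3$.

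Your cross-block twist has the same gap as the $3d$ argument and is again just Lemma~\ref{lem:cross-inequality}. Its hypotheses hold because every product of odd cycles has chromatic number $3$, which is at most each palette size $3$ or $5$. This pairing-plus-cross-inequality route is how the paper proves Corollary~\ref{cor:many-odd-upper}. There Theorem~\ref{thm:cycle-products} supplies a five-colour B-colouring of every two-factor odd torus, giving $\lceil 5d/2\rceil$ for arbitrary odd lengths, not only for $C_3^d$.
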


Theorems~\ref{thm:GS-even-products} and~\ref{thm:GS-odd-products} therefore leave exceptional even discrete tori open in higher dimensions and odd products open beyond the divisibility case. We first settle the two-dimensional odd case, thereby completing the classification of torus grids.

\begin{theorem}[Torus grids]
\label{thm:torus-grids}
For all integers $m,n\ge3$,
\[
q_B(C_m\square C_n)=
\begin{cases}
5,&\text{if at least one of }m,n\text{ is odd};\\
5,&\text{if }\{m,n\}=\{4,4k+2\}\text{ for some integer }k\ge1;\\
4,&\text{otherwise}.
\end{cases}
\]
\end{theorem}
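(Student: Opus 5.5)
The even case and the odd case are handled separately. If both $m$ and $n$ are even, then $C_m\square C_n$ is an even discrete torus with $d=2$. Theorem~\ref{thm:GS-even-products} gives $q_B=4$ in the non-exceptional case and $q_B(C_4\square C_{4k+2})=5$ in the exceptional case. So the second and third lines of the statement follow directly. It remains to prove that $q_B(C_m\square C_n)=5$ whenever at least one of $m,n$ is odd. Since $G$ is $4$-regular, $q_B\ge4$, so we need a lower bound argument excluding $4$ colors and a $5$-color construction.

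\textbf{Lower bound.} Suppose $m$ is odd and a B-coloring with colors $\{1,2,3,4\}$ exists. By regularity, every vertex sees all four colors, so each color class is a perfect matching. When $mn$ is odd this is already impossible. When $n$ is even we need a structural lemma, for which I would try the following. Fix a vertex $v$ whose horizontal edges carry colors $\{a,b\}$ and whose vertical edges carry $\{c,d\}$. Every square through $v$ is rainbow and every neighbour of $v$ also sees all four colors. Using these two facts I would try to show that the horizontal edges at the vertical neighbour $u$ of $v$ again carry $\{a,b\}$. Propagating this around the whole torus, every row would be properly $2$-edge-colored by a fixed pair. But a row of $C_m\square C_n$ is a copy of $C_m$ with $m$ odd, which has no proper $2$-edge-coloring, a contradiction. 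Verifying the local propagation step is the point I expect to be the main obstacle. If the naive statement fails, I would instead classify the possible colorings of the two squares sharing a vertical edge, find an invariant that is preserved from one vertical edge to the next along a row, and show that this invariant forces $m$ to be even.

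\textbf{Upper bound.} I would build $5$-colorings from modular patterns. On $\Z_5\times\Z_5$, color the horizontal edge from $(i,j)$ to $(i+1,j)$ by $i+2j \bmod 5$ and the vertical edge from $(i,j)$ to $(i,j+1)$ by $i+2j+\alpha \bmod 5$, choosing $\alpha$ so that the coloring is proper and every square is rainbow. This periodic pattern handles all lengths divisible by $5$, in agreement with Theorem~\ref{thm:GS-odd-products}. For general lengths, I would work with column blocks of a few widths, for example $3,4,5$ and small even widths. Each block will be colored so that its boundary column of vertical edges and the incident horizontal edges match a common interface. Concatenating such blocks realises every $m\ge3$, and doing the same in the other direction realises every $n\ge3$. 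Since every integer $\ge3$ is a nonnegative combination of the block widths, this covers all sizes. Checking that every square straddling a seam between two blocks is rainbow is a finite computation, which I would organise in tables.
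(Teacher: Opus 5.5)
Your even case (via Theorem~\ref{thm:GS-even-products}) is correct, and so is your odd--odd lower bound: each color class would be a perfect matching of a graph with an odd number $mn$ of vertices. The other two parts have genuine gaps.

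\textbf{Lower bound for $m$ odd, $n$ even.} The propagation step cannot be proved from the local facts you plan to use, because it fails in genuine four-color B-colorings. On $C_8\square C_8$, color $(i,j)(i+1,j)$ by $i+j$ and $(i,j)(i,j+1)$ by $i+j+2$ in $\Z_4$. Then vertex $(i,j)$ sees $i+j-1,i+j,i+j+1,i+j+2$, and every square sees $i+j,\ldots,i+j+3$, so this is a B-coloring. Yet the horizontal pair is $\{i+j-1,i+j\}$ at $(i,j)$ and $\{i+j,i+j+1\}$ at $(i,j+1)$, and every row uses all four colors. Indeed, if $vu$ has color $c$, the two squares through $vu$ only force the horizontal pair at $u$ to be $\{a,b\}$, $\{a,d\}$ or $\{b,d\}$. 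So the oddness of $m$ must enter globally, and your fallback invariant is never identified.

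The paper uses a double count instead. Fix a color $c$. Let $B_j$ be the number of $c$-edges in row $j$ (a copy of $C_m$), and let $A_j$ be the number of $c$-edges between rows $j$ and $j+1$. Counting $c$ at the vertices of row $j$ gives $A_{j-1}+A_j+2B_j=m$. Counting it on the squares between rows $j$ and $j+1$ gives $2A_j+B_j+B_{j+1}=m$. Hence $A_j+B_j=T$ is constant and $A_{j+1}-A_j=2T-m$. Cyclic closure then forces $2T=m$, contradicting that $m$ is odd. This argument covers both parities of $n$ at once.

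\textbf{Upper bound.} Your $\Z_5$ pattern is valid with $\alpha=3$, but it only handles the case where both lengths are divisible by $5$. The block plan is not yet a proof, because no block is given. Since the blocks must be concatenated in both directions, you need two-dimensional tiles whose interfaces match horizontally, vertically and at corners, for every combination of widths and heights. That is exactly where all the difficulty lies, and it is not clear such a tile system exists.

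The paper instead reduces via Lemma~\ref{lem:cover-pullback}:
\begin{itemize}
\item $m\equiv0\pmod4$ reduces to $C_4\square C_{2n+1}$, which is colored by concatenating explicit blocks $B_3,B_4,B_5$ along the odd direction only;
\item $m\equiv2\pmod4$ reduces to an odd--odd grid by doubling;
\item odd--odd grids get a periodic base pattern with finitely many explicit corrections in the terminal rows and columns.
\end{itemize}
Some explicit family of this kind is needed to complete your argument.
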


Since torus grids are $4$-regular, Theorem~\ref{thm:torus-grids} gives $q_B(G)=\Delta(G)$ precisely in the non-exceptional even case and $q_B(G)=\Delta(G)+1$ otherwise. Closing only one direction leaves path boundaries: they permit four colors for every even circumference and, for circumference $2n+1$, until the path has more than $n$ vertices.

\begin{theorem}[Cylindrical grids]
\label{thm:cylindrical-grids}
Let $m\ge2$ be an integer. For every integer $n\ge2$,
\[
q_B(C_{2n}\square P_m)=4.
\]
For every integer $n\ge1$,
\[
q_B(C_{2n+1}\square P_m)=
\begin{cases}
4,&2\le m\le n,\\
5,&m\ge n+1.
\end{cases}
\]
\end{theorem}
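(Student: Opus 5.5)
We begin with the lower bound $q_B\ge4$, which holds uniformly in all cases. If $m\ge3$, then $C_s\square P_m$ has maximum degree $4$ and properness already forces four colors. If $m=2$, the graph is a cubic prism, and each square face is a $4$-cycle with four edges; under a proper $3$-edge-coloring a $4$-cycle cannot be rainbow. Hence $q_B\ge4$ in every case, and it remains to prove the upper bounds, together with the lower bound $5$ for odd circumference and $m\ge n+1$.

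For even circumference we embed into a torus. A B-coloring of $G$ restricts to a B-coloring of any subgraph $H$, since every $4$-cycle of $H$ is a $4$-cycle of $G$. Choose an integer $k$ with $4k\ge m$ and $4k\ge 8$. Then $P_m\subseteq C_{4k}$, so $C_{2n}\square P_m\subseteq C_{2n}\square C_{4k}$. This torus is non-exceptional: if $2n=4$, the other factor is $\equiv0\pmod4$; if $2n\ge6$, then $n_1\ne2$ after ordering the factors, because $4k\ge8$. Theorem~\ref{thm:GS-even-products} therefore gives a $4$-color B-coloring, and $q_B(C_{2n}\square P_m)=4$. For odd circumference with $m\ge n+1$, the upper bound $5$ comes the same way from Theorem~\ref{thm:torus-grids}: $C_{2n+1}\square P_m\subseteq C_{2n+1}\square C_{m'}$ for any $m'\ge\max(m,3)$. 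Alternatively, we can give a direct $5$-coloring modelled on the $(2d+1)$-divisible construction of Theorem~\ref{thm:GS-odd-products}, repaired at a seam.

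For $C_{2n+1}\square P_m$ with $2\le m\le n$, we construct an explicit $4$-coloring. Write the vertices as $(x,j)$ with $x\in\Z_{2n+1}$ and $1\le j\le m$. On layer $j$, the odd cycle must use a color pattern that alternates two colors except for a local defect of length one or two where a third color appears. Our plan is to place the defect of layer $j$ at position about $x=j$, so the defects march diagonally. We color the rung $(x,j)(x,j+1)$ with the two colors missing at that vertex from its layer, arranged so that opposite rungs of each square differ and both rung colors differ from the two layer edges of the square. Away from the defects this is the standard $4$-coloring of $P_\infty\square P_m$, using horizontal colors $\{1,2\}$ and $\{3,4\}$ alternating by layer, with rungs taking the complementary pair in alternation. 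Near the diagonal of defects we patch locally. The requirement $m\le n$ should be exactly what keeps the diagonal from wrapping around the odd cycle and meeting itself: a strip of $2n+1$ columns has room for $m\le n$ shifted defects, each of which occupies two columns. We will verify the local patch by a finite case check of the squares meeting the defect diagonal.

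The lower bound $q_B(C_{2n+1}\square P_m)\ge5$ for $m\ge n+1$ is the main obstacle. The plan is to analyse an arbitrary $4$-coloring layer by layer. In an interior layer every vertex sees all four colors. Since layer $j$ is an odd cycle, its edge coloring is not $2$-colored, so there is a nonempty set $D_j$ of "defect" vertices where consecutive layer edges do not follow the complementary-pair alternation. Using rainbowness of the squares between layers $j$ and $j+1$, we will show that the rung colors propagate the layer pattern rigidly. Each defect of layer $j$ forces a defect of layer $j+1$ within horizontal distance one, shifted consistently in one direction, and the defects can never cancel because the parity of the odd cycle is preserved. We also need an argument at the boundary layers, where vertices have degree $3$. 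This gives a defect path that moves monotonically around the cycle, one step per layer. After $n+1$ layers, the accumulated shifts, together with the requirement that the alternation on either side of the defects be consistent around a cycle of length $2n+1$, should produce a contradiction: either two defect trails collide, or some square is forced to repeat a color. Making this propagation lemma precise, in particular ruling out defects that stay vertically aligned, is where we expect most of the case analysis to lie. We would first attempt it by classifying all rainbow colorings of a $2\times2$ block of squares, i.e. of $P_3\square P_3$, with four colors.
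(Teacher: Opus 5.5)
Your lower bound $q_B\ge4$, your even-circumference argument and your upper bound $5$ are correct. For even circumference your route is shorter than the paper's. You restrict a $4$-color B-coloring of the non-exceptional torus $C_{2n}\square C_{4k}$, with $4k\ge\max(m,8)$. The paper instead splits into three cases: $m=2$ by the explicit pattern (4.7), $n=2$ via $C_4\square P_m\cong P_2\square P_2\square P_m$ and Theorem~\ref{thm:GS-path-products}, and $n\ge3$ via the cross inequality. Your upper bound $5$ by restriction from an odd torus is exactly the paper's argument. The two substantive claims for odd circumference, however, are not proved.

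\textbf{Lower bound for $m\ge n+1$.} You outline a defect-propagation program whose central lemma is neither formulated nor proved. The missing idea is a short double count. Fix a color $c$, let $A_j$ be the number of $c$-colored edges in row $j$, and let $B_j$ be the number of $c$-colored rungs between rows $j$ and $j+1$. With only four colors, every square and every vertex of an interior row sees $c$ exactly once, so $A_j+A_{j+1}+2B_j=2n+1$ and $2A_j+B_{j-1}+B_j=2n+1$. Comparing these shows that $A_j+B_j$ and $A_{j+1}+B_j$ are both constant. Since the two constants sum to $2n+1$, the sequence $(A_j)$ is an arithmetic progression with odd common difference. As $0\le A_j\le n$ and there are at least $n+1$ rows, the difference is $\pm1$ and $A_0(c)\in\{0,n\}$ for all four colors. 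Then $2n+1=\sum_cA_0(c)$ would be a multiple of $n$, which is impossible for $n\ge2$. For $n=1$ you must still rule out $C_3\square P_2$, which has no interior row. The paper does this by a direct finite check (Lemma~\ref{lem:C3-path}); your plan does not address it.

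\textbf{Four-coloring for $2\le m\le n$.} You give no coloring, and the sketch cannot be carried out as described. Your bulk pattern is not proper: if the layer edges at $(x,j)$ are colored $1,2$ and those at $(x,j+1)$ are colored $3,4$, no color remains for the rung between them. The correct bulk pattern puts all layer edges in one pair and all rungs in the complementary pair, each in a checkerboard. Even then, layers consisting of a fixed alternating pair plus a defect of length one or two cannot persist when $m\ge3$. By the identities above, each $A_j(c)$ changes by a nonzero odd amount per layer. A majority color has $A_j(c)\ge n-1$, so it must decrease (otherwise $A_{j+2}(c)>n$). It therefore loses at least two edges every two layers, and the defect has to grow.

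The paper instead works over $\Z_2^2$. It prescribes row $0$ explicitly and sets $h_{j+1}(i)=h_j(i)+v_j(i)+v_j(i+1)$ and $v_{j+1}(i)=v_j(i)+h_{j+1}(i-1)+h_{j+1}(i)$. Three distinct elements of $\Z_2^2$ sum to the fourth, so every new square and interior vertex is rainbow once the three inputs are distinct; the closed forms (4.3)--(4.4) confirm this. In those forms, layer $j$ is a $\{0,1\}$-alternating segment of length $2j+1$ followed by a $\{2,3\}$-alternating segment of length $2n-2j$, and the two junctions move in opposite directions. An odd segment cannot fill the whole odd cycle, so this is possible only for $j\le n-1$. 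That, rather than fitting $m$ width-two defects into $2n+1$ columns, is the source of the condition $m\le n$.
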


We next return to the exceptional even family as a continuation of Theorem~\ref{thm:GS-even-products}. Although $C_4\square C_{4k+2}$ requires an extra color for every integer $k\ge1$, the obstruction disappears in higher dimensions.

\begin{theorem}[Exceptional even discrete tori]
\label{thm:exceptional-even-classification}
Let $d\ge1$ be an integer, and let $2n_1\le\cdots\le2n_d$ be the factor lengths of an exceptional even discrete torus, where the $n_i\ge2$ are integers. Then
\[
q_B(C_{2n_1}\square\cdots\square C_{2n_d})=
\begin{cases}
4,&d=1,\\
5,&d=2,\\
2d,&d\ge3.
\end{cases}
\]
\end{theorem}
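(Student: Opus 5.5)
For $d=1$ the torus is $C_4$, and $q_B(C_4)=4$ by Theorem~\ref{thm:GS-even-products}. For $d=2$ it is $C_4\square C_{4k+2}$, and $q_B(C_4\square C_{4k+2})=5$ by the same theorem. So everything reduces to $d\ge3$. There the lower bound $q_B\ge 2d=\Delta$ is trivial because the torus is $2d$-regular, and the whole work is an upper-bound construction with $2d$ colors.

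\emph{Product lemma.} I will first prove that $q_B(G\square H)\le q_B(G)+q_B(H)$ whenever $G$ and $H$ are both bipartite. Fix a B-coloring $\varphi_0$ of $G$ and let $\varphi_1=\pi\circ\varphi_0$, where $\pi$ is a fixed-point-free permutation of the palette of $\varphi_0$. Since $\varphi_1$ is obtained by renaming colors, it is again a B-coloring, and $\varphi_0(e)\ne\varphi_1(e)$ for every edge $e$. Choose $\psi_0,\psi_1$ for $H$ in the same way, on a disjoint palette. Let $p$ denote the bipartition side of a vertex. Color the $G$-edge $(u,x)(v,x)$ by $\varphi_{p(x)}(uv)$ and the $H$-edge $(u,x)(u,y)$ by $\psi_{p(u)}(xy)$. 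We check the three requirements.
\begin{itemize}
\item Properness holds because each layer is properly colored and the two palettes are disjoint.
\item A $4$-cycle lying inside one layer is rainbow by assumption.
\item A mixed square on $\{u,v\}\times\{x,y\}$ receives the colors $\varphi_{p(x)}(uv)$, $\varphi_{p(y)}(uv)$, $\psi_{p(u)}(xy)$, $\psi_{p(v)}(xy)$. These are pairwise distinct because $p(x)\ne p(y)$, $p(u)\ne p(v)$, $\pi$ has no fixed points, and the two palettes are disjoint.
\end{itemize}

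\emph{Reduction to $d=3$.} For $d\ge4$, write the torus as $(C_4\square C_{2n_2}\square C_{2n_3})\square(C_{2n_4}\square\cdots\square C_{2n_d})$. Both factors are bipartite. The second factor has all factor lengths at least $6$, so it is non-exceptional, and Theorem~\ref{thm:GS-even-products} gives it $B$-coloring number $2(d-3)$. By the product lemma, it therefore suffices to show $q_B(C_4\square C_{4a+2}\square C_{4b+2})=6$ for all $a,b\ge1$. Splitting this three-dimensional case further does not help: the lemma would give at best $2+5=7$ colors.

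\emph{The case $d=3$ is the main obstacle.} My plan is to exploit the extra dimension to absorb the parity twist that forces a fifth color in $C_4\square C_{4k+2}$. I would first find an explicit $6$-coloring of $C_4\square C_6\square C_6$, by hand or by a SAT search, with the following structure. Along each $C_{4a+2}$ direction, the coloring restricted to some window of consecutive layers should be periodic with period $4$. Such a coloring can be stretched to $C_{4a+2}$ and $C_{4b+2}$ by inserting whole periods of $4$ layers. Since the inserted blocks repeat a periodic pattern, every new $4$-cycle is a translate of a $4$-cycle already present in $C_4\square C_6\square C_6$. Hence properness and the rainbow condition carry over.

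Concretely, I would look for a coloring in which the $C_4$-direction edges use a pattern built from two colors that alternates with the other coordinates. The two $C_{4k+2}$ directions would share the remaining four colors in the spirit of the non-exceptional $2$-dimensional colorings, with one "defect" column that is corrected using the third direction. The expected difficulty is making the defect consistent with both wraparounds simultaneously. If a uniform description fails, I would fall back on verifying the base case $C_4\square C_6\square C_6$ together with the period-$4$ insertion argument, which still covers all $a,b\ge1$.
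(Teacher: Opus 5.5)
\textbf{Verdict: the $d=3$ case is not proved.} Your proposal is otherwise sound, and that missing case is the only new content of the theorem.

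Your treatment of $d=1,2$ is correct. So is your bipartite product lemma, which is the cross inequality (Lemma~\ref{lem:cross-inequality}) with $\chi=2$; it needs palettes of size at least two so that $\pi$ exists, which holds here. The reduction of $d\ge4$ to $d=3$ is also correct. All of this matches the paper, which adjoins the remaining even cycles two colors at a time.

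\textbf{The gap.} You need $q_B(C_4\square C_{4a+2}\square C_{4b+2})\le6$ for all $a,b\ge1$, and your proposal contains neither a coloring nor an argument that one exists. ``Find one by hand or by a SAT search'' is a plan, not a proof.

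The structure you ask for is also stronger than it looks. The insertion step needs identical states at positions $j$ and $j+4$, where a state is the layer coloring together with the outgoing joining matching. In $C_6$ the position $j+4$ is $j-2$. So you are really asking for a closed transfer walk of length $2$, that is, a $2$-periodic segment in each long direction. Nothing you wrote shows such a segment is compatible with the remaining constraints.

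Your concrete ansatz fails outright. Every $C_4$-fiber is itself a $4$-cycle and must be rainbow, so the $C_4$-direction edges cannot be drawn from only two colors.

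\textbf{How the paper does it.} Lemma~\ref{lem:exceptional-base} gives an explicit six-coloring of $P_2\square P_2\square C_{2n_1+1}\square C_{2n_2+1}$, built from four layer types and a table of joining profiles. It is uniform in both odd lengths because every profile is a fixed prefix followed by a period-$2$ alternating part, so finitely many local substitutions verify it. Lemma~\ref{lem:cover-pullback} then doubles both cyclic coordinates to give $C_4\square C_{4n_1+2}\square C_{4n_2+2}$ (Corollary~\ref{cor:exceptional-triple}). Passing through odd cycles and then doubling is what absorbs the $4k+2$ parity.

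Those alternating parts do yield a coloring with the $2$-periodic windows your plan requires, so your route is viable in principle. What is missing is exactly the existence and verification of such a coloring.
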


Finally, for a positive integer $\ell$, an \emph{$\ell$-cylindrical grid} is a Cartesian product of paths and cycles with at least one path factor and exactly $\ell$ cycle factors~\cite{Arora2025}; it is \emph{even} when all cycle factors are even. A cylindrical grid is the two-dimensional $1$-cylindrical case.

To state the classification without duplicate representations, we absorb every pair $P_2\square P_2$ into a $C_4$ factor and denote the path and cycle parts as
\[
P=P_2^r\square P_{m_1}\square\cdots\square P_{m_s},\qquad
E=C_{2n_1}\square\cdots\square C_{2n_t},
\]
where $r,s,t$ and all $m_i,n_i$ are integers, $r\in\{0,1\}$, $s\ge0$, $t\ge1$, $m_i\ge3$ for $i=1,2,\ldots,s$, and $n_i\ge2$ for $i=1,2,\ldots,t$, with $2n_1\le\cdots\le2n_t$. Then $G=P\square E$ is $t$-cylindrical, and $r+s\ge1$ ensures a path factor. Let $c_4(E)$ count the $C_4$ factors of $E$. The next theorem agrees with Theorem~\ref{thm:cylindrical-grids} in two dimensions; in higher dimensions it leaves only three classes with a one-color gap.

\Needspace{18\baselineskip}
\begin{theorem}[Even $\ell$-cylindrical grids]
\label{thm:path-even-cycle-products}
Let $G=P\square E$ be as above, and assume $r+s\ge1$.
\begin{enumerate}
\item If $P\cong P_2$ and $t=1$, then $q_B(G)=4$.
\item If $P\cong P_2$ and $E\cong C_4\square C_4$, then $q_B(G)=6$.
\item In each of the following cases, $\Delta(G)\le q_B(G)\le\Delta(G)+1$:
\begin{enumerate}
\item $P\cong P_2$, $c_4(E)=0$, and $t\ge2$;
\item $P\cong P_2$ and $E\cong C_4\square C_4\square C_{4n+2}$ for some integer $n\ge1$;
\item $P\cong P_2\square P_m$ for some integer $m\ge3$ and $c_4(E)=0$.
\end{enumerate}
\item In all remaining cases, $q_B(G)=\Delta(G)$.
\end{enumerate}
\end{theorem}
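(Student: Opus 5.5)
Our plan is to prove the four items separately, using product constructions for the upper bounds and counting or local arguments for the lower bounds. The basic tool is a \emph{product lemma}: if $G_1$ and $G_2$ have B-colorings with disjoint palettes $A_1$ and $A_2$, then coloring each $G_1$-fiber by the $G_1$-coloring and each $G_2$-fiber by the $G_2$-coloring gives a B-coloring of $G_1\square G_2$. This holds because every $4$-cycle of a product lies either inside one fiber or is a ``square'' $e\times f$ whose opposite edges receive the same color. That last fact is a problem, since opposite edges of a square must differ. So the naive product fails.

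We therefore refine the construction into the twisted form used by Gy\'arf\'as and S\'ark\"ozy. The color of an edge in a $G_1$-fiber over vertex $y\in V(G_2)$ is $\varphi_1(e)$ composed with a permutation $\pi_y$ of $A_1$ that depends on a proper vertex coloring of $G_2$, and symmetrically for $G_2$-fibers. The squares are then rainbow provided adjacent vertices of $G_2$ induce fixed-point-free permutations on the relevant colors. With this tool we reduce item~4 to base cases via Theorems~\ref{thm:GS-path-products}, \ref{thm:GS-even-products} and~\ref{thm:exceptional-even-classification}.

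\textbf{Item 4.} If $s\ge1$ with some $P_{m_i}$, $m_i\ge3$, or the path part has dimension at least $2$ other than $P_2\square P_m$, then $P$ alone has $q_B(P)=\Delta(P)$ by Theorem~\ref{thm:GS-path-products}. When $c_4(E)=0$, the torus $E$ is non-exceptional, so $q_B(E)=\Delta(E)$ by Theorem~\ref{thm:GS-even-products}. We combine these by the twisted product, which needs the factor colorings to admit the required fixed-point-free shifts. Even cycles allow shifting $2$ colors by swapping; for paths we use the bipartition. The case $r=1$ with $c_4(E)\ge1$ is handled by absorbing $P_2$ into a $C_4$ factor: we write $P_2\square C_4\square\cdots$ and use $Q_3\square$ constructions, exploiting that $Q_3\square C_4$-type graphs are hypercube-like. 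Theorem~\ref{thm:exceptional-even-classification} handles $d\ge3$ tori, covering $P_2\square E$ once $E$ contains enough $C_4$ factors.

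\textbf{Items 1 and 2.} For item~1, $P_2\square C_{2n}$ is the cylindrical case $m=2$ of Theorem~\ref{thm:cylindrical-grids}, so it equals $4$. For item~2, the graph $P_2\square C_4\square C_4\cong Q_5$ (since $C_4=Q_2$), and Theorem~\ref{thm:GS-path-products} gives $6$.

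\textbf{Item 3.} The upper bound $\Delta+1$ comes from the product lemma, giving one factor an extra color; for example, $P_2$ receives a private color and $E$ receives its $\Delta(E)$-coloring twisted. The lower bound is just $\Delta$, so nothing more is needed.

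The main obstacle is item~4 in the mixed $C_4$ cases, namely $r=1$, $E$ containing $C_4$ factors, and $E$ not of the forms in item~3. Here we expect to need explicit base colorings, such as $Q_3\square C_{4n+2}$ and $P_2\square C_4\square C_{2n}$ with $2n\equiv0\pmod 4$, before induction. We would first search for periodic colorings with period $4$ along cycle directions.
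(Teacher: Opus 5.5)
Items 1--3 of your plan are essentially fine. In (3a) you cross $P_2$ with $E$, rather than crossing $P_2\square C_{2n}$ with the remaining cycles as the paper does. That works, provided the $P_2$ block gets two colors, as Lemma~\ref{lem:cross-inequality} requires since $\chi(E)=2$; both routes give $\Delta(G)+1$.

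\textbf{Main gap: item 4 with $P\cong P_2$.} You explicitly defer these cases to a future search for periodic colorings. The unresolved cases are exactly those where a natural block has B-coloring number above its maximum degree:
\begin{itemize}
\item $Q_3$, with $q_B=4=\Delta+1$;
\item $C_4$, with $q_B=4=\Delta+2$;
\item $C_4\square C_{4n+2}$, with $q_B=5=\Delta+1$.
\end{itemize}
Your twisted product is the cross inequality, in which palettes simply add and every block costs at least two colors. Hence no grouping of the factors of, say, $P_2\square C_4\square C_{4n+2}$ into crossed blocks reaches $\Delta(G)=5$; the best grouping, $Q_3$ crossed with $C_{4n+2}$, gives $6$.

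The missing ingredient is the $Q_3$ product lemma (Lemma~\ref{lem:Q3-product}): $q_B(Q_3\square H)\le q_B(H)+3$ for every connected bipartite $H$ with at least two edges. It gives your desired base colorings $Q_3\square C_{2n}$, $2n\ge6$, with five colors at once. With $k=c_4(E)\ge1$, it settles $P\cong P_2$ as follows.
\begin{itemize}
\item For $k=1$, write $G=Q_3\square H$, where $H$ is the remaining $C_4$-free cycle product, so $q_B(H)=\Delta(H)$.
\item For $k=2$ outside items 2 and 3(b), keep one $C_4$ inside $H$. Then $H$ is neither $C_4$ nor $C_4\square C_{4n+2}$, so Corollary~\ref{cor:all-even-products} gives $q_B(H)=\Delta(H)$. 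This is where Theorem~\ref{thm:exceptional-even-classification} is actually needed, e.g.\ for $H=C_4\square C_{4a+2}\square C_{4b+2}$. That theorem concerns tori with a single $C_4$ factor, not ``$E$ with enough $C_4$ factors'' as you suggest.
\item For $k\ge3$, absorb $P_2$ and all $C_4\cong P_2\square P_2$ factors into $Q_{2k+1}$. Since $2k+1\ge7$, Theorem~\ref{thm:GS-path-products} gives $q_B(Q_{2k+1})=2k+1$; then cross with the rest.
\end{itemize}

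\textbf{Smaller gap: general $P$ with $C_4$ factors.} Your argument for general $P$ covers only $c_4(E)=0$. It never treats $r=0$ with $c_4(E)\ge1$, e.g.\ $P_3\square C_4$ or $P_3\square C_4\square C_6$.
\begin{itemize}
\item If $E$ is neither $C_4$ nor $C_4\square C_{4n+2}$, Corollary~\ref{cor:all-even-products} and the cross inequality suffice.
\item If $E\cong C_4$ or $E\cong C_4\square C_{4n+2}$, crossing with $E$ loses colors. The fix is the reverse absorption: rewrite $C_4$ as $P_2\square P_2$. Then $P\square P_2\square P_2$ is a path product that is not an exception of Theorem~\ref{thm:GS-path-products}, because $P$ has a factor $P_m$ with $m\ge3$. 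Finally cross with $C_{4n+2}$ if present.
\end{itemize}
The same absorption handles $P\cong P_2\square P_m$ with $k\ge1$, since $P_2^{2k+1}\square P_m$ is a non-exceptional path product. So no new explicit colorings are needed anywhere in item 4.
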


The rest of this paper is organized as follows. Section~\ref{sec:preliminaries} records the product tools. Sections~\ref{sec:two-cycles} and~\ref{sec:cylindrical-grids} prove the two-dimensional results; Sections~\ref{sec:exceptional-even} and~\ref{sec:path-products} treat the higher-dimensional classes. The final section states two open problems.

\section{Preliminaries}
\label{sec:preliminaries}
For integers $d\ge1$ and $n_1,\ldots,n_d\ge2$, the product $P_{n_1}\square\cdots\square P_{n_d}$ is a $d$-dimensional grid. We use product coordinates throughout the constructions. Given graphs $G$ and $H$ and a vertex $x\in V(H)$, let $G(x)$ denote the copy of $G$ in $G\square H$ indexed by $x$, and define copies of $H$ analogously. Coordinates belonging to a cycle factor are read modulo the length of that cycle.

The cross inequality below combines B-colorings of two factors, but its hypotheses also involve their chromatic numbers. Sabidussi's theorem supplies these chromatic numbers for all products considered here; as usual, $\chi(G)$ denotes the chromatic number of $G$.

\begin{theorem}[Sabidussi~\cite{Sabidussi1957}]
\label{thm:sabidussi}
For all graphs $G$ and $H$,
\[
\chi(G\square H)=\max\{\chi(G),\chi(H)\}.
\]
\end{theorem}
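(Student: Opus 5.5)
The plan is to prove the two inequalities separately, using the additive (cyclic group) trick for the upper bound.

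For the lower bound, I will observe that $G\square H$ contains a copy of $G$ (namely $G(x)$ for any fixed $x\in V(H)$) and a copy of $H$ as subgraphs. Since the chromatic number is monotone under taking subgraphs, this gives $\chi(G\square H)\ge\max\{\chi(G),\chi(H)\}$. If one factor is empty, the statement is vacuous.

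For the upper bound, set $k=\max\{\chi(G),\chi(H)\}$. I will fix proper colorings $c_G\colon V(G)\to\mathbb{Z}_k$ and $c_H\colon V(H)\to\mathbb{Z}_k$; this is possible because each factor needs at most $k$ colors. I then define
\[
c(u,x)=c_G(u)+c_H(x)\pmod k .
\]
To check that $c$ is proper, take an edge $(u,x)(v,y)$ of $G\square H$. By the definition of the Cartesian product, either $x=y$ and $uv\in E(G)$, or $u=v$ and $xy\in E(H)$. In the first case the two colors differ by $c_G(u)-c_G(v)\ne0$ in $\mathbb{Z}_k$. In the second case they differ by $c_H(x)-c_H(y)\ne0$. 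Hence $c$ is a proper $k$-coloring, and therefore $\chi(G\square H)\le k$.

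There is no real obstacle here. The only point needing care is that adjacency in the Cartesian product changes exactly one coordinate, and this is precisely what makes the shift-by-sum coloring work; it would fail for the strong or tensor products.
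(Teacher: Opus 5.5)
The paper does not prove this statement; it quotes it from Sabidussi~\cite{Sabidussi1957}. Your argument is the standard proof and is correct. For the lower bound you use monotonicity under the copies $G(x)$ and $H(u)$; for the upper bound you use the coloring $c_G(u)+c_H(x)$ in $\Z_k$. This is the vertex-coloring analogue of the cyclic-shift trick the paper uses in Lemma~\ref{lem:cross-inequality}.

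One small correction concerns your remark about empty factors. A factor with no vertices is a genuine exception, not a vacuous case: then $\chi(G\square H)=0$ while $\max\{\chi(G),\chi(H)\}$ may be positive, so the statement tacitly assumes every graph has a vertex. Edgeless factors with at least one vertex need no separate treatment, because your argument already covers them.
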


We use the following proper edge-coloring form of the product construction in~\cite{FaudreeGyarfasLesniakSchelp1993}; see also~\cite{GyarfasSarkozy2023}.

\begin{lemma}[Cross inequality]
\label{lem:cross-inequality}
Suppose that $G$ has a $p$-color B-coloring and that $H$ has an $r$-color B-coloring. If $\chi(G)\le r$ and $\chi(H)\le p$, then
\[
q_B(G\square H)\le p+r.
\]
\end{lemma}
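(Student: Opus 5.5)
We will prove the cross inequality by an explicit product construction. Let $\varphi_G\colon E(G)\to\{1,\ldots,p\}$ be a B-coloring of $G$ and $\varphi_H\colon E(H)\to\{p+1,\ldots,p+r\}$ a B-coloring of $H$, so the two palettes are disjoint. Since $\chi(H)\le p$, fix a proper vertex coloring $\alpha\colon V(H)\to\Z_p$. Since $\chi(G)\le r$, fix a proper vertex coloring $\beta\colon V(G)\to\Z_r$. We identify the palette $\{1,\ldots,p\}$ with $\Z_p$ and $\{p+1,\ldots,p+r\}$ with $p+\Z_r$.

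We color the copies as follows. An edge $(u,x)(v,x)$ of the copy $G(x)$ receives the color $\varphi_G(uv)+\alpha(x)\in\Z_p$. An edge $(u,x)(u,y)$ of the copy $H(u)$ receives the color $p+(\varphi_H(xy)+\beta(u))\in p+\Z_r$. Thus every copy of $G$ is colored by a cyclic shift of $\varphi_G$, and every copy of $H$ by a shift of $\varphi_H$. This uses $p+r$ colors in total.

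\textbf{Properness.} The two palettes are disjoint, so two edges at a vertex $(u,x)$ can conflict only if both lie in $G(x)$ or both lie in $H(u)$. Within one copy, the coloring is a fixed shift of a proper coloring, hence it is proper.

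\textbf{Rainbow $4$-cycles.} Every $4$-cycle of $G\square H$ is of one of three types:
\begin{itemize}
\item it lies inside a single copy $G(x)$;
\item it lies inside a single copy $H(u)$;
\item it is a mixed square $(u,x)(v,x)(v,y)(u,y)$ with $uv\in E(G)$ and $xy\in E(H)$.
\end{itemize}
To verify this classification, we use the structure of $G\square H$. Any two consecutive edges of a $4$-cycle that change different coordinates force the cycle to be such a square, because the fourth vertex is determined and the product has no other common neighbours. Hence a cycle that is not a mixed square changes only one coordinate, and so lies in one copy.

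In the first two cases the cycle is rainbow, because a shifted B-coloring is still a B-coloring. In the mixed case the two $G$-edges receive $\varphi_G(uv)+\alpha(x)$ and $\varphi_G(uv)+\alpha(y)$. These colors differ because $\alpha(x)\ne\alpha(y)$, as $xy\in E(H)$ and $\alpha$ is proper. Similarly, the two $H$-edges receive distinct colors because $\beta(u)\ne\beta(v)$. The $G$-edges and $H$-edges lie in disjoint palettes. Hence all four colors are distinct, and the coloring is a B-coloring with $p+r$ colors.

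The only step requiring care is the mixed-square case. This is exactly where the hypotheses $\chi(H)\le p$ and $\chi(G)\le r$ are used: they make the shifts well defined modulo $p$ and $r$ and different across each edge. The rest is routine.
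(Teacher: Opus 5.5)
Your proof is correct and follows the paper's construction: disjoint cyclic palettes, each copy of $G$ (resp.\ $H$) colored by a cyclic shift of its B-coloring indexed by a proper vertex coloring of $H$ (resp.\ $G$), and mixed squares made rainbow because the shifts differ across every edge. Your common-neighbour argument, showing that every $4$-cycle outside a single copy is a product square, justifies a step the paper only asserts.
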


\begin{proof}
Use disjoint color sets $A$ and $B$, with $|A|=p$ and $|B|=r$, and fix B-colorings $\alpha$ of $G$ and $\beta$ of $H$. Identify each palette with a cyclic group, and fix proper vertex colorings of $G$ and $H$ using $\chi(G)$ and $\chi(H)$ colors, respectively. For the vertex colors of $H$, choose distinct cyclic shifts of $A$; this is possible because $\chi(H)\le p$. For the vertex colors of $G$, choose distinct cyclic shifts of $B$; this is possible because $\chi(G)\le r$.

Color the copy $G(x)$ by the shift indexed by the vertex color of $x\in V(H)$, and color copies of $H$ analogously. The coloring is proper because the two factor directions use disjoint palettes and each copy receives a permutation of a proper edge-coloring. A $4$-cycle contained in a single factor copy is rainbow. Every other $4$-cycle is a product square. Its two $G$-edges are copies of the same edge of $G$ indexed by adjacent vertices of $H$, and hence receive distinct colors; the same holds for its two $H$-edges. Since $A$ and $B$ are disjoint, the square is rainbow.
\end{proof}

For products containing a $Q_3$ factor, the following refinement saves one color over the general cross inequality.

\begin{lemma}[The $Q_3$ product lemma~\cite{FaudreeGyarfasLesniakSchelp1993,GyarfasSarkozy2023}]
\label{lem:Q3-product}
If $H$ is a connected bipartite graph with at least two edges, then
\[
q_B(Q_3\square H)\le q_B(H)+3.
\]
Consequently, if $q_B(H)=\Delta(H)$, then $q_B(Q_3\square H)=\Delta(Q_3\square H)$.
\end{lemma}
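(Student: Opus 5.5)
The plan is to prove the inequality with an explicit product coloring. The ``consequently'' clause then follows immediately: $\Delta(Q_3\square H)=\Delta(H)+3$, so $q_B(Q_3\square H)\ge\Delta(H)+3=q_B(H)+3$ is the trivial lower bound, and the inequality supplies the matching upper bound.

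For the construction, put $q=q_B(H)$ and fix a B-coloring $\beta$ of $H$ with palette $\Z_q$. Since $H$ is connected with at least two edges, it contains a path with two edges, so $q\ge2$. Also fix the bipartition $V(H)=X_0\cup X_1$. Write $Q_3=P_2\square P_2\square P_2$ with vertices $\varepsilon\in\{0,1\}^3$. Let $a,b,c$ be three new colors, one per direction of $Q_3$. Naively, color every $Q_3$-edge in direction $i$ with the $i$-th new color, and color the copy $H(\varepsilon)$ by $\beta+s(\varepsilon)$ for a shift $s(\varepsilon)\in\Z_q$. A mixed square (one $Q_3$-direction, one $H$-edge) is rainbow exactly when adjacent $\varepsilon,\varepsilon'$ receive distinct shifts, which is a proper vertex $2$-coloring of $Q_3$ by parity. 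This is the cross inequality pattern with $\chi(Q_3)=2\le q$.

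The squares inside a copy $Q_3(x)$ are the real difficulty, because a direction coloring makes them $2$-colored. Since $q_B(Q_3)=4$, three new colors cannot suffice inside a copy. Each copy $Q_3(x)$ must therefore borrow colors from the $H$-palette. To avoid conflicts with the $H$-edges at $(\varepsilon,x)$, which already use $\deg_H(x)$ shifted colors, I would let the shift $s$ depend on $x$ as well as $\varepsilon$. The intended scheme is as follows:
\begin{itemize}
\item In each $Q_3(x)$, replace one color class of a $3$-color direction coloring by a $4$-color B-coloring of $Q_3$.
\item Exchange the roles of $a,b,c$ according to the side $X_0$ or $X_1$ containing $x$.
\item Route the extra fourth color through the $H$-palette, using a perfect matching of $Q_3$ on which the local $H$-color at $(\varepsilon,x)$ is absent.
\end{itemize}

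Concretely, I would first try the following coloring. An edge $(\varepsilon,x)(\varepsilon,y)$ gets color $\beta(xy)+\varepsilon_1+\varepsilon_2+\varepsilon_3$. A $Q_3$-edge in direction $i$ at $(\varepsilon,x)$ with $x\in X_j$ gets one of $a,b,c$, chosen so that, together with the $H$-parity, the three $Q_3$-directions form a Latin-type pattern in which opposite edges of every face differ. I would verify the four types of $4$-cycles in turn:
\begin{itemize}
\item squares inside $H$-copies, rainbow by $\beta$;
\item mixed squares, rainbow by the parity shift;
\item faces of $Q_3$-copies;
\item properness at each vertex.
\end{itemize}
The main obstacle I expect is the $Q_3$ faces. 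Because $H$ is bipartite and connected, I would exploit that every vertex $x$ has a neighbor, so a color $\beta(xy)$ is available to serve as the fourth color on a face. The key step is showing this borrowing can be made consistent across all of $H$. If that fails, I would fall back to the original Faudree--Gy\'arf\'as--Lesniak--Schelp construction for $Q_3$-factors, adapting it with the bipartition of $H$ in place of the vertex parity used there.
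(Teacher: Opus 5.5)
Your derivation of the ``consequently'' clause from $\Delta(Q_3\square H)=\Delta(H)+3$ is the same as the paper's. For the inequality, the paper gives no construction: it cites the recoloring construction of Faudree, Gy\'arf\'as, Lesniak and Schelp, in the proper form stated by Gy\'arf\'as and S\'ark\"ozy. Citing that would match the paper, but the self-contained argument you propose has a genuine gap.

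\emph{The explicit coloring fails on faces.} It gives every $Q_3$-edge one of $a,b,c$, so every face of every copy $Q_3(x)$ is a properly colored $4$-cycle on at most three colors. A ``Latin-type pattern in which opposite edges of every face differ'' would be a $3$-color B-coloring of $Q_3$, which you already know does not exist.

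\emph{The naive version fails on mixed squares.} The two $Q_3$-edges of a mixed square are parallel copies of one edge, and both receive the same direction color. So shifting $\beta$ by parity never makes mixed squares rainbow. One needs $Q_3(x)$ and $Q_3(y)$ to disagree on every edge whenever $xy\in E(H)$; this is where the bipartition must be used.

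\emph{The borrowing repair cannot work as set up.} As long as every $H$-edge keeps a shifted $\beta$-color, no $\beta$-color is free for a $Q_3$-edge. Take $H$ $q$-regular, e.g.\ $H=C_{2n}$ with $n\ge3$, which is exactly how the lemma is used in Section~\ref{sec:path-products}. Then all of $\Z_q$ already occurs on $H$-edges at every vertex $(\varepsilon,x)$, for every choice of $s(\varepsilon,x)$, so the $Q_3$-edges are again confined to $\{a,b,c\}$. The step you defer is the whole lemma, and it forces some $H$-edges to take colors outside the $\beta$-palette.

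One exchange that works is as follows. Let $\phi_0$ be a $4$-color B-coloring of $Q_3$ on $\{1,a,b,c\}$, where $1$ is a color of $\beta$ (palette $\{1,\ldots,q\}$), and let $m(\varepsilon)$ be the color missing at $\varepsilon$. Every color class of $\phi_0$ has exactly one edge in each direction and leaves an antipodal pair of vertices uncovered. Hence, for the antipodal map $\sigma$, the coloring $\phi_1=\phi_0\circ\sigma$ has the same missing colors and differs from $\phi_0$ on every edge, and $m$ is a proper vertex coloring of $Q_3$.

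Color $Q_3(x)$ by $\phi_j$ for $x\in X_j$. Color the $H$-edge $(\varepsilon,x)(\varepsilon,y)$ by $\pi_\varepsilon(\beta(xy))$, where $\pi_\varepsilon$ maps $\{1,\ldots,q\}$ bijectively onto $\{m(\varepsilon),2,\ldots,q\}$ as follows. When $\varepsilon_1+\varepsilon_2+\varepsilon_3$ is even, let $1\mapsto m(\varepsilon)$ and $k\mapsto k$ for $k\ge2$. When it is odd, let $k\mapsto k+1$ for $k<q$ and $q\mapsto m(\varepsilon)$.

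Properness and the squares inside copies of $H$ and of $Q_3$ are then direct checks. For a mixed square, $\pi_\varepsilon(k)\ne\pi_{\varepsilon'}(k)$ for adjacent $\varepsilon,\varepsilon'$. Neither value equals $\phi_0(e)$ or $\phi_1(e)$, because $m(\varepsilon)$ is missing at $\varepsilon$ in both colorings. This uses $q+3$ colors. Without an exchange of this kind, or the citation, the inequality remains unproved.
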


\begin{proof}
The inequality is the special recoloring construction for $Q_3$ from~\cite{FaudreeGyarfasLesniakSchelp1993}; its proper version is stated in~\cite{GyarfasSarkozy2023}. If $q_B(H)=\Delta(H)$, the upper bound is $\Delta(H)+3$, which equals $\Delta(Q_3\square H)$ and hence is also a lower bound.
\end{proof}

We will repeatedly enlarge cycle factors by repeating an existing B-coloring. The following lemma shows that this can be done in several cycle factors at once.
\begin{lemma}
\label{lem:cover-pullback}
Let $H$ be a graph, let $d$ and $k$ be positive integers, and let $n_i\ge3$ and $t_i\ge1$ be integers for $i=1,2,\ldots,d$. Set
\[
\begin{aligned}
G&=H\square C_{n_1}\square\cdots\square C_{n_d},\\
\widetilde G&=H\square C_{t_1n_1}\square\cdots\square C_{t_dn_d}.
\end{aligned}
\]
If $c$ is a B-coloring of $G$ using $k$ colors, then repeating the color pattern of $c$ $t_i$ times in the $i$th cycle coordinate, for $i=1,2,\ldots,d$, gives a B-coloring of $\widetilde G$ using the same $k$ colors.
\end{lemma}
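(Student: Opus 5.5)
The plan is to use the covering map. Let $\pi:\widetilde G\to G$ be the map that is the identity on the $H$-coordinate and sends each $C_{t_in_i}$-coordinate $y_i$ to $y_i \bmod n_i$. Define the coloring $\tilde c(e)=c(\pi(e))$ for every edge $e$ of $\widetilde G$. This is exactly the "repeat the pattern $t_i$ times" coloring. It is well defined because $n_i$ divides $t_in_i$, so $\pi$ is a graph homomorphism: the edge between $y_i$ and $y_i+1 \pmod{t_in_i}$ maps to the edge between $y_i$ and $y_i+1 \pmod{n_i}$. Since $\tilde c$ uses only the colors of $c$, at most $k$ colors appear.

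Properness follows because $\pi$ is a local isomorphism. Since $n_i\ge3$, the neighbors $y_i\pm1$ of $y_i$ remain distinct after reduction mod $n_i$. Hence $\pi$ maps the edges at each vertex $v$ bijectively onto the edges at $\pi(v)$. Two edges sharing $v$ therefore map to two distinct edges sharing $\pi(v)$, and these get distinct colors under $c$.

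The main step is the rainbow $4$-cycle condition. Let $Q=v_0v_1v_2v_3$ be a $4$-cycle in $\widetilde G$. Its image $\pi(v_0)\pi(v_1)\pi(v_2)\pi(v_3)$ is a closed walk of length $4$ in $G$. Local injectivity shows that consecutive edges of this walk are distinct, so the walk has no immediate backtracking. I expect two cases, and the delicate point is to show the image is a genuine $4$-cycle.

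\textbf{Case 1: all four edges of $Q$ lie in the same factor direction.}
\begin{itemize}
\item If that direction is $H$, then $Q$ lies in a copy $H(x)$. The map $\pi$ is an isomorphism from $H(x)$ onto $H(\pi(x))$, so the image is a $4$-cycle.
\item If that direction is a cycle factor $C_{t_in_i}$, then $t_in_i=4$. Since $n_i\ge3$, this forces $t_i=1$, so $\pi$ is the identity in that coordinate.
\end{itemize}

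\textbf{Case 2: $Q$ uses two different directions.} In a Cartesian product, a $4$-cycle not contained in one factor copy is a product square. It moves $+a,+b,-a,-b$ along edges $a$ and $b$ of two different factors. Its image is the product square with the same pattern in $G$. Since the two directions differ, the image is a genuine $4$-cycle, as its four vertices are distinct.

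A cleaner way to handle both cases at once is to check that the image vertices are distinct. The vertices $v_0$ and $v_2$ differ in at most two coordinates, each by at most $2$ steps along a cycle of length at least $3$. If $\pi(v_0)=\pi(v_2)$, then in each such cycle coordinate the difference of at most two steps would vanish mod $n_i$. With $n_i\ge3$, that is impossible unless the difference is already zero; the case $n_i=4$ with a difference of exactly $2$ does not arise from a two-step walk that is non-backtracking in that coordinate.

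In every case the image of $Q$ is a $4$-cycle $\pi(Q)$ of $G$. Because $\pi$ is injective on the edges of $Q$, the edges of $Q$ receive the colors of $\pi(Q)$, which are pairwise distinct since $c$ is a B-coloring. Hence $\tilde c$ is a B-coloring of $\widetilde G$ using the same $k$ colors.
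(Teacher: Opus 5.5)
Your proposal is correct and follows essentially the same route as the paper: the reduction map is a local bijection on edges, which gives properness, and every $4$-cycle either lies in a copy of $H$, lies in a $C_4$ factor (where $n_i\ge3$ forces $t_i=1$), or is a product square mapped onto a product square of $G$. The one point to add is that every edge of $G$ has a preimage under $\pi$, so all $k$ colors of $c$ actually occur under $\tilde c$, as ``the same $k$ colors'' requires; as written, your argument only shows that at most $k$ colors are used.
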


\begin{proof}
Write a vertex of $\widetilde G$ as $(x,i_1,\ldots,i_d)$, where $x\in V(H)$ and $i_j\in\Z_{t_jn_j}$ for $j=1,2,\ldots,d$. Let $\bar i_j$ be the reduction of $i_j$ modulo $n_j$, and define
\[
\rho(x,i_1,\ldots,i_d)=(x,\bar i_1,\ldots,\bar i_d)\in V(G).
\]
For every edge $uv\in E(\widetilde G)$, the vertices $\rho(u)$ and $\rho(v)$ are adjacent in the same factor direction. Define
\[
\widetilde c(uv)=c\bigl(\rho(u)\rho(v)\bigr).
\]
Thus, in the $j$th cyclic coordinate, the color pattern of period $n_j$ is repeated $t_j$ times.

At each vertex of $\widetilde G$, the map $\rho$ sends the incident edges bijectively to the incident edges at its image in $G$. Their colors under $\widetilde c$ are therefore pairwise distinct, so $\widetilde c$ is proper.

It remains to check the $4$-cycles. A $4$-cycle in a Cartesian product either lies in a single factor or is a product square determined by two distinct factors. If all four edges belong to a copy of $H$, then $\rho$ maps the cycle bijectively onto the corresponding $4$-cycle of $G$. A cycle factor contains a $4$-cycle only when that factor is $C_4$. Since $n_j\ge3$ and $t_jn_j=4$ imply $n_j=4$ and $t_j=1$, such a factor is unchanged. Finally, $\rho$ maps every product square, together with its four boundary edges, bijectively onto a product square of $G$. In every case, the four colors are those of a $4$-cycle of $G$ and hence are pairwise distinct.

Thus $\widetilde c$ is a B-coloring. Every edge of $G$ has a preimage under $\rho$, so every color used by $c$ also occurs under $\widetilde c$. Hence the two colorings use the same $k$ colors.
\end{proof}
Figure~\ref{fig:cover-pullback} shows a five-color B-coloring of $C_3\square C_5$ and the coloring of $C_6\square C_5$ obtained by repeating the $C_3$-direction twice.

\begin{figure}[htbp]
\centering
\begin{tikzpicture}[x=1.0cm,y=0.92cm,scale=1.25,transform shape]

\foreach \x in {0,1,2}{
  \foreach \y in {0,...,4}{
    \node[bvertex] (A\x\y) at (\x,\y) {};
  }
}

\foreach \y/\ca/\cb/\cc in {0/4/5/1,1/5/3/2,2/3/4/1,3/4/1/2,4/5/2/4}{
  \draw[bedge\ca] (A0\y)--(A1\y);
  \draw[bedge\cb] (A1\y)--(A2\y);
  \draw[bedge\cc,bend left=28] (A2\y) to (A0\y);
}

\foreach \x/\cA/\cB/\cC/\cD/\cE in {0/3/4/5/1/2,1/2/1/2/3/1,2/4/5/3/5/3}{
  \draw[bedge\cA] (A\x0)--(A\x1);
  \draw[bedge\cB] (A\x1)--(A\x2);
  \draw[bedge\cC] (A\x2)--(A\x3);
  \draw[bedge\cD] (A\x3)--(A\x4);
  \draw[bedge\cE,bend right=28] (A\x4) to (A\x0);
}

\node[font=\small] at (1,-1.25) {$C_3\square C_5$};

\begin{scope}[xshift=6.2cm]
\foreach \x in {0,...,5}{
  \foreach \y in {0,...,4}{
    \node[bvertex] (B\x\y) at (\x,\y) {};
  }
}

\foreach \y/\ca/\cb/\cc in {0/4/5/1,1/5/3/2,2/3/4/1,3/4/1/2,4/5/2/4}{
  \draw[bedge\ca] (B0\y)--(B1\y);
  \draw[bedge\cb] (B1\y)--(B2\y);
  \draw[bedge\cc] (B2\y)--(B3\y);
  \draw[bedge\ca] (B3\y)--(B4\y);
  \draw[bedge\cb] (B4\y)--(B5\y);
  \draw[bedge\cc,bend left=28] (B5\y) to (B0\y);
}

\foreach \x/\cA/\cB/\cC/\cD/\cE in {0/3/4/5/1/2,1/2/1/2/3/1,2/4/5/3/5/3,3/3/4/5/1/2,4/2/1/2/3/1,5/4/5/3/5/3}{
  \draw[bedge\cA] (B\x0)--(B\x1);
  \draw[bedge\cB] (B\x1)--(B\x2);
  \draw[bedge\cC] (B\x2)--(B\x3);
  \draw[bedge\cD] (B\x3)--(B\x4);
  \draw[bedge\cE,bend right=28] (B\x4) to (B\x0);
}

\node[font=\small] at (2.5,-1.25) {$C_6\square C_5$};
\end{scope}

\draw[-{Latex[length=3mm]},thick]
  (3.15,2.05)--(5.05,2.05)
  node[midway,above,font=\small] {repeat twice};

\node[font=\scriptsize,anchor=west] at (11.4,4.45) {colors};
\foreach \c/\legendy in {1/4.05,2/3.60,3/3.15,4/2.70,5/2.25}{
  \draw[bedge\c] (11.4,\legendy)--++(0.42,0)
    node[right,font=\scriptsize,text=black] {\c};
}

\end{tikzpicture}
\par\smallskip
\caption{A five-color B-coloring of $C_3\square C_5$ and the coloring of $C_6\square C_5$ obtained by repeating the $C_3$-direction twice. The legend at upper right identifies the five edge colors.}
\label{fig:cover-pullback}
\end{figure}

\section{Torus grids with an odd factor}
\label{sec:two-cycles}

\begin{theorem}[Torus grids with an odd side]
\label{thm:cycle-products}
For all integers $m\ge3$ and $n\ge1$,
\[
q_B(C_m\square C_{2n+1})=5.
\]
\end{theorem}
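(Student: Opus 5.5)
The plan has two halves: the lower bound and a finite family of explicit constructions that Lemma~\ref{lem:cover-pullback} stretches to all lengths.

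\emph{Lower bound.} $G=C_m\square C_{2n+1}$ is $4$-regular with $m(2n+1)$ vertices. If $m$ is odd, the vertex count is odd, so no perfect matching exists. Hence $G$ is not class one, and already a proper edge-coloring needs $5$ colors.

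If $m$ is even, a parity argument is needed instead. Suppose a $4$-color B-coloring exists. Each color class is then a perfect matching, since $G$ is $4$-regular and each color appears at every vertex. Consider a vertical copy $C_{2n+1}(x)$ and an adjacent copy $C_{2n+1}(x')$ joined by horizontal rungs. Each product square is rainbow, so its two vertical edges differ, and its two rungs differ from both of them and from each other. At a vertex $v$ of $C_{2n+1}(x)$, the two vertical edges use two colors, so the two horizontal edges at $v$ use exactly the complementary pair. Thus the colors on each vertical copy determine, at every vertex, a complementary pair. Along the odd cycle $C_{2n+1}(x)$, consecutive vertical edges have distinct colors, so an odd cycle cannot be $2$-colored. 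I will exploit the rainbow condition on the squares between $C_{2n+1}(x)$ and $C_{2n+1}(x')$: the rung colors at consecutive vertices $v,w$ must be distinct, and they must also avoid the colors of $vw$ and of its copy $v'w'$. This should force the rung color $h(v)$ to satisfy $h(v)\neq h(w)$, with $h(v)$ drawn from the complement of the colors at $v$. I expect a local forcing to show that the rung sequence along the odd cycle alternates between two values or is otherwise $2$-periodic, which is impossible on an odd cycle. This local case analysis is the main obstacle. If the direct forcing is messy, I would first try counting: in the $4$-coloring, each color class restricted to the vertical edges must be a matching on odd cycles, so its size parity is constrained. I would then combine this with the perfect-matching condition, where horizontal edges of a color pair up vertices in adjacent columns, to derive a parity contradiction modulo $2$.

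\emph{Upper bound.} By Lemma~\ref{lem:cover-pullback}, it suffices to give $5$-color B-colorings of finitely many base graphs $C_a\square C_b$ with $b$ odd. Their lengths must cover every $m\ge3$ and every odd $2n+1\ge3$ as multiples, and no base graph may contain a $C_4$ factor that would need to be repeated.
\begin{itemize}
\item For both lengths, Theorem~\ref{thm:GS-odd-products} gives $5$ colors when both lengths are divisible by $5$. The figure supplies $C_3\square C_5$.
\item Every odd length $\ge3$ is a multiple of some odd prime $p$ or equals such a $p$. This is only a first reduction, because it still leaves infinitely many primes. So I will instead use a direct construction: a diagonal pattern $c(i,j)=$ linear in $i,j$ modulo $5$ for the $5$-divisible parts. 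I then handle general lengths by writing each length as a nonnegative combination $3a+5b$ (for lengths $\ge8$, plus the small cases). I will concatenate blocks of widths $3$ and $5$ whose boundary columns agree, in the spirit of the repetition lemma, so that they splice consistently.
\end{itemize}
I would check that the blocks for widths $3$ and $5$ can be chosen to share identical boundary colorings, so that any concatenation is still proper and keeps all squares rainbow. The remaining small cases, namely lengths $3,4,5,7$ in each coordinate, would be settled by explicit colorings.
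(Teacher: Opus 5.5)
Your argument for odd $m$ (odd order, hence no perfect matching, hence not class one) is correct. For even $m$, however, you give no proof, and both routes you sketch would fail.

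The local forcing cannot work. The constraints you list (consecutive rungs distinct, each avoiding the cycle colors at its ends and the colors of $vw$ and $v'w'$) are all satisfied by the four-color B-coloring of $C_9\square P_2$ in Figure~\ref{fig:odd-path-strip}. Its rung sequence $0,1,0,1,0,1,0,2,3$ on the odd cycle is not $2$-periodic. More generally, $C_{2n+1}\square P_k$ has a four-color B-coloring for $k\le n$ (Lemma~\ref{lem:odd-path-short}). So no argument confined to a bounded window of consecutive copies of $C_{2n+1}$ can produce a contradiction once $n$ is large.

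The mod-$2$ fallback also fails; note that a matching in $C_{2n+1}$ may have any size from $0$ to $n$, so there is no parity constraint there. For a color $c$, let $A_i$ count the $c$-rungs between copies $i$ and $i+1$, and let $B_i$ count the $c$-edges of copy $i$. In a four-color B-coloring each color occurs exactly once at every vertex and exactly once on every square. Counting $c$ over the vertices of copy $i$ and over the squares between copies $i$ and $i+1$ therefore gives
\[
A_{i-1}+A_i+2B_i=2n+1,\qquad 2A_i+B_i+B_{i+1}=2n+1.
\]
Modulo $2$, both identities are satisfied by $A_i\equiv B_i\equiv i$ when $m$ is even, so no parity contradiction exists. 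The missing step is to use these identities over the integers. They force $A_i+B_i=T$ to be constant and $A_{i+1}-A_i=2T-(2n+1)$ to be a constant odd number, and such differences cannot sum to $0$ around $\Z_m$. This is Lemma~\ref{lem:cycle-lower-bound}, and it handles every $m$ at once.

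The upper bound is also only a plan. Apart from colorings pulled back from $C_5\square C_5$ and $C_3\square C_5$, you construct nothing. The $3a+5b$ splicing is the real difficulty, not a routine check. For every odd $q$ you would need width-$3$ and width-$5$ blocks on $C_q$ sharing a common boundary state. Building those for all $q$ by splicing in the other direction requires compatibility in both directions simultaneously, which you only assert.

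Your \emph{small cases} are not finitely many graphs either. The lengths $4$ and $7$ leave the infinite families $C_4\square C_{2n+1}$, $C_7\square C_{2n+1}$ and $C_m\square C_7$. Also, Lemma~\ref{lem:cover-pullback} does allow repeating a $C_4$ factor; Corollary~\ref{cor:C4a-odd-cycle} uses exactly this.

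The paper supplies these infinite families explicitly:
\begin{itemize}
\item blocks $B_3,B_4,B_5$ concatenated along the odd direction for $C_4\square C_{2n+1}$ (Lemma~\ref{lem:C4-odd-cycle});
\item a periodic pattern with explicit seam corrections for all odd $p,q$ (Proposition~\ref{prop:odd-odd-upper});
\item Lemma~\ref{lem:cover-pullback} to reach $m\equiv0,2\pmod4$.
\end{itemize}
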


The proof of Theorem~\ref{thm:cycle-products} begins with a parity count that rules out a four-color B-coloring.

\begin{lemma}
\label{lem:cycle-lower-bound}
For all integers $m\ge3$ and $n\ge1$, the torus grid $C_m\square C_{2n+1}$ has no four-color B-coloring.
\end{lemma}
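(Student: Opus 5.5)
We argue by contradiction. Suppose $c$ is a B-coloring of $G=C_m\square C_{2n+1}$ using only four colors. Since $G$ is $4$-regular, every vertex sees all four colors exactly once. Hence each color class is a perfect matching of $G$.

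\emph{The case $m$ odd.} Then $|V(G)|=m(2n+1)$ is odd, so $G$ has no perfect matching, a contradiction. From now on we assume $m$ is even. In this case the obstruction has to come from the odd cycles $C_{2n+1}(x)$, $x\in\Z_m$, which we call the columns.

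\emph{Parity count on columns.} Fix a color $\gamma$. In a column $C_{2n+1}(i)$, the vertices whose $\gamma$-edge is vertical are paired by those edges, so there is an even number of them. Consequently an odd number of the $2n+1$ vertices of the column have a horizontal $\gamma$-edge. Let $S_i^\gamma\subseteq\Z_{2n+1}$ be the set of rows $j$ for which the edge $(i,j)(i+1,j)$ has color $\gamma$. The vertices of column $i$ with a horizontal $\gamma$-edge are indexed by $S_{i-1}^\gamma\sqcup S_i^\gamma$; the union is disjoint by properness. Therefore
\[
|S_{i-1}^\gamma|+|S_i^\gamma|\equiv1\pmod 2
\qquad\text{for every } i\in\Z_m .
\]
This alone gives no contradiction when $m$ is even. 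The missing ingredient must come from the rainbow-square condition.

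\emph{Local lemma.} The plan is to extract from the rainbow-square condition a local rule forcing $|S_{i-1}^\gamma|$ and $|S_i^\gamma|$ to have the same parity, or some equivalent $\Z_2$-valued invariant. Concretely:
\begin{enumerate}
\item For a vertical edge $vu$ of color $b$, let $a,x$ be the right and left horizontal colors at $v$, and $a',x'$ those at $u$. The two squares containing $vu$ force $a\ne a'$ and $x\ne x'$. They also force the right vertical edge to receive the unique color outside $\{a,a',b\}$, and the left vertical edge the unique color outside $\{x,x',b\}$.
\item Define a $\Z_2$-label on each vertical edge, recording whether $\{a,a'\}=\{x,x'\}$. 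Do a case analysis on a $3\times3$ block of vertices, i.e.\ four squares, to show how these labels and the horizontal color pairs $H(v)$ propagate.
\item From this propagation, show either that the label must flip along every vertical edge, which is impossible around the odd column, or that it yields the parity relation above.
\end{enumerate}
Opposite edges of a square cannot share a color, so in particular $S_i^\gamma$ contains no two cyclically consecutive rows. We will use this together with the local rule.

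\emph{Main obstacle.} Finding the correct local invariant is where I expect the real work to be. I would first test small cases: a four-color attempt on $C_4\square C_3$ by hand, and the periodic pattern in Figure~\ref{fig:cover-pullback}, to see which quantity is conserved or alternates along columns. Once the invariant is identified, the contradiction follows in one of two ways. Either we sum around an odd column, or we sum the displayed congruence over $i\in\Z_m$ and over the four colors $\gamma$, since $\sum_\gamma\sum_i|S_i^\gamma|=m(2n+1)$ counts all horizontal edges.
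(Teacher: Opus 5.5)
Your reduction to perfect matchings, the odd-$m$ case and the column congruence are all correct. For even $m$, however, which is the substantive case, the proposal never reaches a contradiction: the ``local lemma'' is only a search plan, and the invariant it needs is never produced.

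The plan also looks in the wrong place. Alternative (i) of your step 3 cannot come out of a case analysis on a $3\times3$ block. For $k,l\ge3$, coloring $(i,j)(i+1,j)$ with $1+((i+j)\bmod 2)$ and $(i,j)(i,j+1)$ with $3+((i+j)\bmod 2)$ gives a four-color B-coloring of $C_{2k}\square C_{2l}$. Any rule derived from the properness and square conditions on a $3\times3$ block also holds in this coloring. Here every vertical edge has $\{a,a'\}=\{x,x'\}=\{1,2\}$, so the label never flips, and no such rule can force flipping. The other route does not help either: summing your congruence over $i$ and $\gamma$ gives only $2m(2n+1)\equiv 4m$, i.e.\ $0\equiv 0$.

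The missing idea is to count \emph{exactly} rather than modulo $2$, and to count squares as well as vertices; this is the paper's argument. Write $A_i=|S_i^\gamma|$, and let $B_i$ be the number of $\gamma$-edges inside column $i$.

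Your vertex count is really the equation $A_{i-1}+A_i+2B_i=2n+1$. For the square count, each of the $2n+1$ squares between columns $i$ and $i+1$ is rainbow with only four colors available, so it contains $\gamma$ exactly once. A horizontal edge there lies in two of these squares, and a vertical edge of column $i$ or $i+1$ lies in one, so $2A_i+B_i+B_{i+1}=2n+1$.

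Subtracting the square count at $i-1$ from the vertex count at $i$ shows that $A_i+B_i=T$ is independent of $i$. Substituting $B_i=T-A_i$ into the square count gives $A_{i+1}-A_i=2T-(2n+1)$, a constant odd integer. Summing over $i\in\Z_m$ forces this constant to vanish, a contradiction. The argument handles every $m\ge3$ at once, so your separate odd-$m$ case becomes unnecessary.

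This also explains why parity bookkeeping could not work. Modulo $2$, the two counts say only $A_{i-1}+A_i\equiv1$ and $B_i+B_{i+1}\equiv1$, and for even $m$ both hold with $A_i\equiv B_i\equiv i$. The obstruction is integral: a nonzero constant drift cannot close up around $\Z_m$.
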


\begin{proof}
Suppose that such a coloring exists. Use coordinates $\Z_m\times\Z_{2n+1}$, and denote by $h(i,j)$ and $v(i,j)$ the colors of $(i,j)(i+1,j)$ and $(i,j)(i,j+1)$, respectively. Fix a color $c$, and for $i\in\Z_m$ let
\[
A_i=|\{j:h(i,j)=c\}|,\qquad B_i=|\{j:v(i,j)=c\}|.
\]
At every vertex, the four incident edges have the four colors. Summing over column $i$ gives
\[
A_{i-1}+A_i+2B_i=2n+1. \tag{3.1}
\]
Similarly, every square contains color $c$ exactly once, and summing over the squares between columns $i$ and $i+1$ gives
\[
2A_i+B_i+B_{i+1}=2n+1. \tag{3.2}
\]
Comparing (3.1) and (3.2), with indices shifted when necessary, yields $A_i+B_i=A_{i-1}+B_{i-1}$. Thus $A_i+B_i=T$ is independent of $i$. Substituting $B_i=T-A_i$ and $B_{i+1}=T-A_{i+1}$ into (3.2) gives $A_{i+1}-A_i=2T-(2n+1)$. Since the indices are cyclic, this constant difference is zero. Hence $2T=2n+1$, a contradiction.
\end{proof}

It remains to construct five-colorings. We begin with $C_4\square C_{2n+1}$.

\begin{table}[htbp]
\centering
\caption{Blocks for the five-coloring of $C_4\square C_{2n+1}$. In each column, $Y_i$ lists the colors on the $C_4$ copy and $X_i$ lists the four edges to the next column.}
\label{tab:C4-odd-blocks}
\begin{tabular}{c|cc|cc|cc}
\toprule
& \multicolumn{2}{c|}{$B_3$} & \multicolumn{2}{c|}{$B_4$} & \multicolumn{2}{c}{$B_5$}\\
column & $Y_i$ & $X_i$ & $Y_i$ & $X_i$ & $Y_i$ & $X_i$\\
\midrule
0 & 1234 & 3542 & 1234 & 5451 & 1234 & 5451\\
1 & 2315 & 4153 & 2143 & 1532 & 2143 & 4532\\
2 & 5421 & 2315 & 3214 & 5143 & 3251 & 5413\\
3 &      &      & 4521 & 2315 & 1342 & 4251\\
4 &      &      &      &      & 5423 & 2315\\
\bottomrule
\end{tabular}
\end{table}

\begin{lemma}
\label{lem:C4-odd-cycle}
For every integer $n\ge1$, $q_B(C_4\square C_{2n+1})\le5$.
\end{lemma}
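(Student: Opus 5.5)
We build the coloring by concatenating the three blocks of Table~\ref{tab:C4-odd-blocks} along the $C_{2n+1}$ direction. Use coordinates $\Z_4\times\Z_{2n+1}$. A \emph{column} $i$ is the copy $C_4(i)$. Its four edges $(a,i)(a+1,i)$, $a\in\Z_4$, receive the colors listed in $Y_i$. The four edges $(a,i)(a,i+1)$ to the next column receive the colors listed in $X_i$. Block $B_k$ has $k$ columns, for $k\in\{3,4,5\}$.

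Every odd integer $2n+1\ge3$ can be written as $3+4a$, $5+4a$, $4+\dots$, or as a sum of copies of $4$ plus one $3$ or one $5$. Concretely, $2n+1=4a+3$ or $2n+1=4a+5$ with $a\ge0$. So the coloring is one $B_3$ or one $B_5$ followed by $a$ copies of $B_4$. Two features of the table support this gluing. First, every block begins with the same column $Y_0=1234$. Second, every block ends with the same outgoing matching $X=2315$. Hence the interface between any block and the next is identical, whichever blocks are used. It therefore suffices to verify all local conditions on the pattern "last column of a block, its outgoing $X$, first column $1234$ of the next block" once, together with the internal conditions of each block.

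The checks are the following. (i) Each $Y_i$ uses four distinct colors. (ii) Properness at each vertex $(a,i)$: the colors $Y_i(a-1)$, $Y_i(a)$, $X_{i-1}(a)$ and $X_i(a)$ are distinct. (iii) The $C_4$ copy itself is rainbow, which is already given by (i). (iv) Each product square between columns $i$ and $i+1$ is rainbow, i.e.\ the colors $Y_i(a)$, $Y_{i+1}(a)$, $X_i(a)$ and $X_i(a+1)$ are distinct. The graph $C_4\square C_{2n+1}$ has no other $4$-cycles, since $C_{2n+1}$ contains a $4$-cycle only in the excluded case $2n+1=4$. These are finitely many conditions on the table: the internal ones within $B_3$, $B_4$ and $B_5$, plus the three seams last column$\to$$1234$ (last column $5421$ or $4521$ or $5423$, each with outgoing $2315$). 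So the proof reduces to a direct verification.

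The main obstacle is just making sure the seam check is complete. At a seam, properness at the first column of the next block involves the previous block's last $X$ (namely $2315$) together with $X_0$ of the new block. The latter differs between blocks ($3542$ versus $5451$), so vertex conditions at column $0$ must be checked for each block following each possible predecessor. Since all predecessors end with $X=2315$, this is one check per block. The small case $n=1$, which uses $B_3$ alone wrapping onto itself, is covered by the same seam check.
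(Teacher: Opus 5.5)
Your proposal is correct and follows the paper's proof essentially verbatim. You use one $B_3$ (for $2n+1\equiv3\pmod4$) or one $B_5$ (for $2n+1\equiv1\pmod4$) followed by copies of $B_4$, and note that every block starts with $Y_0=1234$ and ends with $X=2315$. That makes every concatenation point locally identical, so the proof reduces to the same finite substitution into the vertex and square conditions; your refinement that the column-$0$ vertex check depends only on the successor block's $X_0$ is accurate, and the table does pass all these checks.
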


\begin{proof}
Set $q=2n+1$ and use coordinates $(i,j)\in\Z_q\times\Z_4$, where $i$ indexes the $C_q$ factor.
If $q\equiv3\pmod4$, concatenate one copy of $B_3$ and $(q-3)/4$ copies of $B_4$. If $q\equiv1\pmod4$, concatenate one copy of $B_5$ and $(q-5)/4$ copies of $B_4$. The final horizontal list of each block is $2315$, and the initial vertical list of each block is $1234$, so the same local transition occurs at every concatenation point.

Let $y_i(j)$ and $x_i(j)$ be the $j$th entries of $Y_i$ and $X_i$, respectively. At vertex $(i,j)$ the four incident colors are
\[
\{y_i(j-1),y_i(j),x_{i-1}(j),x_i(j)\},
\]
and the square with lower-left corner $(i,j)$ has colors
\[
\{x_i(j),x_i(j+1),y_i(j),y_{i+1}(j)\}.
\]
Substitution in Table~\ref{tab:C4-odd-blocks} shows that both sets have size four, including at every block boundary. Hence the concatenated coloring is a B-coloring.
\end{proof}

\begin{corollary}
\label{cor:C4a-odd-cycle}
For all integers $n_1,n_2\ge1$, $q_B(C_{4n_1}\square C_{2n_2+1})\le5$.
\end{corollary}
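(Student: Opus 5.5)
This corollary follows directly from Lemma~\ref{lem:C4-odd-cycle} combined with the covering construction of Lemma~\ref{lem:cover-pullback}. Fix integers $n_1,n_2\ge1$. By Lemma~\ref{lem:C4-odd-cycle}, the torus grid $C_4\square C_{2n_2+1}$ admits a B-coloring $c$ with five colors.

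We then apply Lemma~\ref{lem:cover-pullback} with the following choices:
\begin{itemize}
\item $H=C_{2n_2+1}$;
\item $d=1$, with the single cycle factor $C_{n_1'}$ where $n_1'=4$;
\item multiplier $t_1=n_1$.
\end{itemize}
The hypothesis $n_1'=4\ge3$ holds. Since $C_4\square C_{2n_2+1}\cong H\square C_4$, repeating the color pattern of $c$ $n_1$ times in the $C_4$ coordinate gives a B-coloring of $H\square C_{4n_1}\cong C_{4n_1}\square C_{2n_2+1}$ with the same five colors. Hence $q_B(C_{4n_1}\square C_{2n_2+1})\le5$.

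The only point to check is that Lemma~\ref{lem:cover-pullback} permits $H$ to be an arbitrary graph, here itself a cycle. Its proof handles $4$-cycles inside copies of $H$ through the bijection $\rho$, so this case is covered. When $2n_2+1=3$, $H$ contains no $4$-cycle, and the argument is unaffected. No genuine obstacle is expected. The substantive difficulty lies in Lemma~\ref{lem:C4-odd-cycle} itself, which is already established.

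Together with Lemma~\ref{lem:cycle-lower-bound}, this yields equality $q_B(C_{4n_1}\square C_{2n_2+1})=5$, although the corollary only asserts the upper bound.
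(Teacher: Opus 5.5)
Your proposal is correct and is the paper's proof: it applies Lemma~\ref{lem:cover-pullback} to repeat the $C_4$-direction of the five-color B-coloring from Lemma~\ref{lem:C4-odd-cycle} $n_1$ times. One small remark: no odd cycle contains a $4$-cycle, so your check on $H$ is automatic for every $n_2\ge1$, not only when $2n_2+1=3$.
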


\begin{proof}
By Lemma~\ref{lem:cover-pullback}, repeating the $C_4$-direction of the coloring in Lemma~\ref{lem:C4-odd-cycle} $n_1$ times gives a five-color B-coloring of $C_{4n_1}\square C_{2n_2+1}$.
\end{proof}

For two odd factors, we use an explicit seam construction. The full seam data are given in Appendix~\ref{app:odd-seams}; only the defining formulas are needed in the proof below.

\begin{proposition}
\label{prop:odd-odd-upper}
For all integers $n_1,n_2\ge1$,
\[
q_B(C_{2n_1+1}\square C_{2n_2+1})\le5.
\]
\end{proposition}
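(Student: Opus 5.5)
The plan is to build an explicit five-color B-coloring of $C_p\square C_q$ with $p=2n_1+1$ and $q=2n_2+1$. The coloring is periodic away from a single ``seam'' in each direction, and the construction reduces to finitely many base cases through the structure of the columns. Use coordinates $(i,j)\in\Z_p\times\Z_q$. As in the proof of Lemma~\ref{lem:C4-odd-cycle}, record for each column $i$ the vertical list $Y_i$, the colors on the copy $C_q(i)$, and the horizontal list $X_i$, the colors of the edges $(i,j)(i+1,j)$. The B-condition is then purely local. At vertex $(i,j)$ the set $\{y_i(j-1),y_i(j),x_{i-1}(j),x_i(j)\}$ must have size four. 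For the square at $(i,j)$ the set $\{x_i(j),x_i(j+1),y_i(j),y_{i+1}(j)\}$ must have size four. So it suffices to exhibit a finite family of column blocks whose concatenations close up cyclically.

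The key steps, in order, are the following.

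First, fix a ``bulk'' pattern on the infinite strip $\Z\times\Z_q$ of period $2$ in the $i$-direction. Two consecutive columns $(Y,X),(Y',X')$ should satisfy the local conditions. This pattern can be repeated, so that inserting two columns changes $p$ to $p+2$.

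Second, design a seam block of odd width $3$ (and, if needed, width $5$). It should start and end in the bulk state, so that concatenating one seam block with $(p-3)/2$ bulk pairs gives a cyclic column sequence of length $p$. Lemma~\ref{lem:cycle-lower-bound} shows that no pure four-color bulk can close with odd period. The fifth color is therefore used in a controlled way, along a diagonal-type seam.

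Third, do the same in the $j$-direction. Each column list $Y_i$ and $X_i$ is itself a word of length $q$. Write these words as a fixed seam word of length $3$ (or $5$) followed by repetitions of a period-$2$ or period-$4$ word. In this way the whole coloring is given by a bounded number of formulas depending on the residues of $i$ and $j$, with the seams placed at $i\in\{0,1,2\}$ and $j\in\{0,1,2\}$.

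Fourth, verify the local vertex and square conditions. Because every $4\times4$ window of the coloring is one of finitely many patterns, independent of $n_1$ and $n_2$ once $p,q\ge 7$, the check is finite. The small cases with $p$ or $q\in\{3,5\}$ are then checked directly. As a shortcut, Lemma~\ref{lem:cover-pullback} applied to Figure~\ref{fig:cover-pullback}'s $C_3\square C_5$ coloring already covers every case with $3\mid p$ and $5\mid q$, and similar base colorings can handle other divisibility classes.

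The main obstacle is the crossing of the two seams, that is, the windows around $(i,j)$ with both $i$ and $j$ near $0$. Each seam is easy to design on its own against the periodic bulk, but where they meet the fifth color must appear exactly once in every square while staying proper. My first attempt will be a skew seam: take the vertical seam in column $i$ at row offset shifted by $i$, i.e.\ a diagonal. The two seams then meet in a single transversal configuration, and that configuration can be found by a small computer or hand search on the $C_3\square C_3$ and $C_3\square C_5$ cores and then propagated. If this fails, I will fall back on choosing separate base colorings for $p,q\bmod 4$ and using a width-$4$ bulk insertion, exactly mirroring the $B_3,B_4,B_5$ blocks of Table~\ref{tab:C4-odd-blocks}, now with both directions odd.
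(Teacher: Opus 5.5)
Your proposal is a plan rather than a proof. The proposition is a pure existence statement. You never write down the bulk columns, the seam blocks, or the configuration where the two seams cross, and you give no argument that they exist. The decisive step is explicitly deferred to a search (``my first attempt \dots\ if this fails, I will fall back on \dots''). Your fourth step, checking finitely many windows, only means something once such blocks are specified, so as it stands nothing has been verified.

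The divisibility shortcut does not fill the gap. Lemma~\ref{lem:cover-pullback} only transfers a coloring of $C_{p'}\square C_{q'}$ to lengths that are multiples of $p'$ and $q'$. When $p$ and $q$ are large primes, no smaller base case applies and a direct construction is unavoidable. Likewise, ``$p$ or $q\in\{3,5\}$'' is not a finite list of cases to check directly. It consists of infinite families such as $C_p\square C_3$, which need their own uniform construction.

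The design choices you do commit to are also not safe. For $q=3$, no five-color period-$2$ bulk on $\Z\times\Z_3$ exists. To see this, normalize the even triangle to colors $1,2,3$ and let $s_j$ be its color opposite vertex $j$. The two horizontal colors at row $j$ then lie in $\{s_j,4,5\}$. Each horizontal word is injective on $\Z_3$, since consecutive entries lie on a common square, so at most one row carries the pair $\{4,5\}$. The odd-triangle edge between rows $j$ and $j+1$ must avoid both horizontal pairs and the even-triangle edge beside it. A two-case check then forces two edges of the odd triangle to share a color. This is why the paper's $q=3$ construction has period $6$ in $i$, with the last two columns depending on $p\bmod 6$.

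Your ``diagonal'' seam, whose row offset increases by one per column, returns to column $0$ shifted by $p$. It therefore closes only when $q\mid p$; for coprime $p,q$ the diagonal $\{(i,i)\}$ covers all of $\Z_p\times\Z_q$ and is not a seam at all.

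What actually makes the paper's proof work is a specific bulk, $h_0(i,j)=4+((i+j+1)\bmod 2)$ and $v_0(i,j)=((j+2-(i\bmod 2))\bmod 3)+1$. This is a $2+3$ split in which the vertical colors cycle with period $3$ in $j$. Closure in $j$ then depends on $q\bmod 6$ and is repaired by two or three corrected terminal rows. Closure in $i$ is repaired by replacing columns $0,p-2,p-1$ with lists $T_j$ that are themselves periodic in $j$ away from the ends. The cases $q=3$ and $p=q=5$ are treated separately. Supplying and checking data of this kind is the content your proposal is missing.
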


\begin{proof}
Set $p=2n_1+1$ and $q=2n_2+1$, and use coordinates $(i,j)\in\Z_p\times\Z_q$. Let $h(i,j)$ and $v(i,j)$ denote the colors of $(i,j)(i+1,j)$ and $(i,j)(i,j+1)$, respectively.

Suppose first that $q=3$. For $0\le i\le p-3$, define
\[
\begin{aligned}
h(i,0)&=(i\bmod3)+3, & v(i,0)&=((i+1)\bmod3)+3,\\
h(i,1)&=2-(i\bmod2), & v(i,1)&=((i+2)\bmod3)+3,\\
h(i,2)&=((i+1)\bmod3)+3, & v(i,2)&=1+(i\bmod2).
\end{aligned}
\]
Let $H_i=(h(i,0),h(i,1),h(i,2))$ and $V_i=(v(i,0),v(i,1),v(i,2))$. The last two columns are specified in Table~\ref{tab:q3-seam}.

\begin{table}[htbp]
\centering
\caption{The final two columns in the construction of $C_{2n_1+1}\square C_3$.}
\label{tab:q3-seam}
\begin{tabular}{c|cccc}
\toprule
$p\bmod6$ & $H_{p-2}$ & $V_{p-2}$ & $H_{p-1}$ & $V_{p-1}$\\
\midrule
1 & 153 & 342 & 532 & 214\\
3 & 135 & 512 & 512 & 243\\
5 & 135 & 412 & 213 & 524\\
\bottomrule
\end{tabular}
\end{table}
Checking the incident colors and the adjacent squares at the two seam columns shows that the coloring is proper and that every square is rainbow. Interchanging the two factors also covers $p=3$.

For $p=q=5$, identify the colors with $\Z_5$ and define $h(i,j)=i+j$ and $v(i,j)=i+j+2$. At each vertex the four incident colors are $i+j-1,i+j,i+j+1,i+j+2$, and on each square they are $i+j,i+j+1,i+j+2,i+j+3$.

After interchanging the factors if necessary, the only remaining range is $p\ge5$ and $q\ge7$. Begin with
\[
h_0(i,j)=4+((i+j+1)\bmod2),\qquad
v_0(i,j)=((j+2-(i\bmod2))\bmod3)+1.
\]
Modify the terminal rows as listed in Appendix~\ref{app:odd-seams}. Then modify only columns $0,p-2,p-1$. For $\xi\in\{0,p-2,p-1\}$, denote by $H_j^\xi$ and $V_j^\xi$ the replacement colors assigned to $h(\xi,j)$ and $v(\xi,j)$, respectively, and collect them as
\[
T_j=(H_j^0,H_j^{p-2},H_j^{p-1};V_j^0,V_j^{p-2},V_j^{p-1}),
\]
the six entries prescribe, respectively, $h(0,j),h(p-2,j),h(p-1,j)$ and $v(0,j),v(p-2,j),v(p-1,j)$. The lists $T_j$ are also given in Appendix~\ref{app:odd-seams}.

\Needspace{10\baselineskip}
All remaining entries retain the values $h_0,v_0$. To verify the construction, it remains to check
\[
|\{h(i-1,j),h(i,j),v(i,j-1),v(i,j)\}|=4, \tag{3.3}
\]
at every vertex and
\[
|\{h(i,j),h(i,j+1),v(i,j),v(i+1,j)\}|=4. \tag{3.4}
\]
for every square.
Away from the terminal rows and columns, the periodic formulas give (3.3)--(3.4). At a seam, each check involves at most two consecutive rows and two consecutive columns. Substituting the finite lists in Appendix~\ref{app:odd-seams} verifies (3.3)--(3.4), including the two cyclic closures. Thus the displayed rules give a five-color B-coloring of $C_{2n_1+1}\square C_{2n_2+1}$.
\end{proof}

\begin{proof}[Proof of Theorem~\ref{thm:cycle-products}]
Lemma~\ref{lem:cycle-lower-bound} gives the lower bound five. For the matching upper bound, Proposition~\ref{prop:odd-odd-upper} applies when $m$ is odd, and Corollary~\ref{cor:C4a-odd-cycle} applies when $m\equiv0\pmod4$. In the remaining case, $m=4a+2$ for some integer $a\ge1$. Proposition~\ref{prop:odd-odd-upper} gives a five-color B-coloring of $C_{2a+1}\square C_{2n+1}$, and Lemma~\ref{lem:cover-pullback} repeats its $C_{2a+1}$-direction twice to give the required coloring of $C_{4a+2}\square C_{2n+1}$.
\end{proof}

\begin{proof}[Proof of Theorem~\ref{thm:torus-grids}]
If at least one side length is odd, the result is Theorem~\ref{thm:cycle-products}. Suppose that $m$ and $n$ are even, with $m\le n$. When $m=4$ and $n=4k+2$ for some integer $k\ge1$, Theorem~\ref{thm:GS-even-products} gives $q_B(C_m\square C_n)=5$. Every other even--even torus grid is non-exceptional, so the same theorem gives $q_B(C_m\square C_n)=4$.
\end{proof}

Theorem~\ref{thm:cycle-products} implies the following corollary.

\begin{corollary}
\label{cor:many-odd-upper}
Let $d\ge1$ and $n_1,\ldots,n_d\ge1$ be integers. Then
\[
q_B(C_{2n_1+1}\square\cdots\square C_{2n_d+1})
\le\left\lceil\frac{5d}{2}\right\rceil.
\]
\end{corollary}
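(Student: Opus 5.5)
We prove the bound by induction on $d$. The idea is to pair the odd factors two at a time, color each pair with five colors using Theorem~\ref{thm:cycle-products}, and then join the pieces with the cross inequality (Lemma~\ref{lem:cross-inequality}). If $d$ is odd, one factor is left unpaired; it is a single odd cycle $C_{2n+1}$, which has a B-coloring with $3$ colors (a proper edge-coloring of an odd cycle needs $3$ colors, and $C_{2n+1}$ contains no $4$-cycle unless its length is $4$, which is impossible). Counting colors, $d/2$ pairs contribute $5\cdot d/2$ when $d$ is even. When $d$ is odd, $(d-1)/2$ pairs and one single cycle contribute $5(d-1)/2+3=(5d+1)/2$. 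Both totals equal $\lceil 5d/2\rceil$.

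The base cases are $d=1$, where the bound is $3=\lceil 5/2\rceil$, and $d=2$, where Theorem~\ref{thm:cycle-products} gives $5$.

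For the inductive step with $d\ge3$, write
\[
G=G'\square C_{2n_{d-1}+1}\square C_{2n_d+1},
\]
where $G'$ is the product of the first $d-2$ odd cycles. By induction $G'$ has a B-coloring with $p=\lceil 5(d-2)/2\rceil$ colors, and $H=C_{2n_{d-1}+1}\square C_{2n_d+1}$ has one with $r=5$ colors. Since $p+5=\lceil 5d/2\rceil$, it suffices to check the two chromatic hypotheses of Lemma~\ref{lem:cross-inequality}.

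By Sabidussi's theorem (Theorem~\ref{thm:sabidussi}), every product of odd cycles has chromatic number $3$. The condition $\chi(G')=3\le r=5$ always holds. The condition $\chi(H)=3\le p$ requires $p\ge3$, which holds because $d-2\ge1$ gives $p\ge\lceil 5/2\rceil=3$. The lemma therefore applies and gives $q_B(G)\le p+5=\lceil 5d/2\rceil$.

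The only points that need care are the base case $d=1$, namely that an odd cycle is trivially B-colorable with $3$ colors, and the check $p\ge3$. Both are immediate, so there is no real obstacle.
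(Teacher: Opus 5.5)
Your proof is correct and is essentially the paper's argument: you pair the odd factors, give each pair five colors via Theorem~\ref{thm:cycle-products}, give any leftover cycle three colors, and combine the pieces by repeated use of Lemma~\ref{lem:cross-inequality}, with Theorem~\ref{thm:sabidussi} supplying chromatic number $3$ for its hypotheses. The only differences are cosmetic: you organize the repetition as an induction in steps of two with the unpaired cycle at the base, and you state explicitly the check $p\ge3$ that the paper leaves implicit.
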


\begin{proof}
Pair the first $2\lfloor d/2\rfloor$ factors. By Theorem~\ref{thm:cycle-products}, each pair has a five-color B-coloring; any unpaired odd cycle has a three-color B-coloring. Theorem~\ref{thm:sabidussi} gives chromatic number three for every nonempty product of these factors, so the hypotheses of Lemma~\ref{lem:cross-inequality} hold at each step. Repeated application of the lemma uses five colors per pair and three for the unpaired cycle, when present, for a total of $\lceil5d/2\rceil$ colors.
\end{proof}

\section{Cylindrical grids}
\label{sec:cylindrical-grids}

\subsection{Odd circumference}

\begin{theorem}[Odd cylindrical grids]
\label{thm:odd-cycle-path}
Let $n\ge1$ and $m\ge2$ be integers. Then
\[
q_B(C_{2n+1}\square P_m)=
\begin{cases}
4,&2\le m\le n,\\
5,&m\ge n+1.
\end{cases}
\]
\end{theorem}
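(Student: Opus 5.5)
The plan has three parts: a counting lower bound that rules out four colors when $m\ge n+1$, a five-color upper bound obtained by restriction, and an explicit four-color construction when $m\le n$. Throughout, the lower bound $4=\Delta$ holds for $m\ge3$, while for $m=2$ the bound $q_B\ge4$ comes from a short argument on $C_{2n+1}\square P_2$.

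\textbf{Lower bound for $m\ge n+1$.} Suppose $C_{2n+1}\square P_m$ has a four-color B-coloring, with rows $R_0,\dots,R_{m-1}$ being the copies of $C_{2n+1}$. Fix a color $c$. Let $a_j$ be the number of edges of $R_j$ colored $c$, and $b_j$ the number of rungs between $R_j$ and $R_{j+1}$ colored $c$. Since $R_j$ is an odd cycle and the coloring is proper, $0\le a_j\le n$.

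As in Lemma~\ref{lem:cycle-lower-bound}, every square contains all four colors, so
\[
a_j+a_{j+1}+2b_j=2n+1 .
\]
Every interior vertex sees all four colors, so
\[
2a_j+b_{j-1}+b_j=2n+1 .
\]
Eliminating the $b$'s gives $a_{j+1}-a_j=a_j-a_{j-1}$, so $(a_j)$ is an arithmetic progression with common difference $d_c$. Its consecutive sums are odd, so $d_c$ is odd and in particular nonzero.

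An arithmetic progression of $m\ge n+1$ terms with nonzero step inside $[0,n]$ forces $m=n+1$ and $d_c=\pm1$. Then $a_0\in\{0,n\}$, with $a_0=0$ when $d_c=+1$ and $a_0=n$ when $d_c=-1$. Now use the two identities $\sum_c a_0(c)=2n+1$ and $\sum_c d_c=0$. The second holds because $\sum_c a_j(c)=2n+1$ for every $j$. It forces exactly two colors to have $a_0=n$, giving $\sum_c a_0(c)=2n\neq2n+1$, a contradiction. I would double-check the degenerate case $m=2$, $n=1$, where there are no interior vertices; the square equation alone still gives odd $d_c$ and the same count applies.

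\textbf{Five-color upper bound.} A B-coloring restricts to a B-coloring of any subgraph, and $P_m$ is a subgraph of $C_q$ for any $q\ge\max(m,3)$. Theorem~\ref{thm:cycle-products} gives a five-color B-coloring of $C_{2n+1}\square C_q$, so restricting it handles every $m$.

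\textbf{Four-color construction for $2\le m\le n$.} By the same restriction argument it suffices to treat $m=n$. The counting above dictates the shape of any such coloring: each color's row counts form an arithmetic progression with step $\pm1$, so two colors have $a_j=j+\alpha$ and two have $a_j=\beta-j$, with row sums $2n+1$. I would look for an explicit coloring in which the rows are colored periodically, each $R_j$ using two "long" colors alternately along most of the cycle. The odd defect of each row would be absorbed by a short "seam" segment, and the seam would shift by one position from row to row so that one color gains and another loses an edge at each step. The rungs are then forced by properness and the square condition.

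This construction, and especially verifying that the seam can migrate exactly $n-1$ times before the boundary rows break, is the main obstacle. I would first find colorings for small cases by hand or computer ($C_5\square P_2$, $C_7\square P_3$, $C_9\square P_4$). From these I would extract a row-recursive rule $(H_j,V_j)\mapsto(H_{j+1},V_{j+1})$ and verify it by the same local vertex and square checks as (3.3)--(3.4).
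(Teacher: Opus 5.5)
Your lower bound for $m\ge n+1$ and the five-color restriction argument are correct. Your extra identity $\sum_c d_c=0$ forces exactly two colors to have $a_0(c)=n$, so your count also works for $n=1$. The paper only uses $a_0(c)\in\{0,n\}$, so that the sum is a multiple of $n$, and it must treat $C_3\square P_2$ separately in Lemma~\ref{lem:C3-path}.

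The gap is the other half of the theorem: the upper bound $q_B(C_{2n+1}\square P_m)\le4$ for $2\le m\le n$ is never established. You outline a search (small cases by computer, then extract a rule), but you specify no coloring and verify nothing, so the value $4$ in the first case is unsupported. Moreover, the shape you propose contradicts your own counting. For $m=n$, every color has $d_c=\pm1$, and two colors gain while two lose an edge in \emph{every} row. So the step is not ``one color gains and another loses''. Nor can every row consist mostly of two alternating colors with a short seam: by the middle row each color occurs about $n/2$ times, so the minority region must grow by two edges per row rather than migrate.

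The missing idea, used in Lemma~\ref{lem:odd-path-short}, is to identify the four colors with $\Z_2^2$. Then the color missing from three distinct colors is their sum. The square condition forces $h_{j+1}(i)=h_j(i)+v_j(i)+v_j(i+1)$, and the vertex condition forces $v_{j+1}(i)=v_j(i)+h_{j+1}(i-1)+h_{j+1}(i)$. Hence the whole coloring is determined by row $0$ and the first rung layer. All that remains is to check that each new row is proper and that consecutive rungs differ, so that the three inputs are always distinct.

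The paper starts from $v_0$ alternating $0,1$ on $0\le i\le 2n-2$ followed by $2,3$, and $h_0$ alternating $3,2$ followed by $1,2$. It then proves the closed formulas (4.3)--(4.4) by induction. In each row the $\{2,3\}$-alternating block shrinks and the $\{0,1\}$-alternating block grows by two edges. The construction stays valid for rows $0,\dots,n-1$, which is as far as your lower bound allows. Your proof still needs an explicit rule of this kind, verified for all $n\ge2$.
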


Identify $V(C_{2n+1}\square P_m)$ with $\Z_{2n+1}\times\{0,1,\ldots,m-1\}$. In an edge-coloring, let $h_j(i)$ denote the color of $(i,j)(i+1,j)$ for $i\in\Z_{2n+1}$ and $0\le j\le m-1$, and let $v_j(i)$ denote the color of $(i,j)(i,j+1)$ for $i\in\Z_{2n+1}$ and $0\le j\le m-2$.

\begin{lemma}
\label{lem:odd-path-short}
If $m$ and $n$ are integers satisfying $n\ge2$ and $2\le m\le n$, then $C_{2n+1}\square P_m$ has a four-color B-coloring.
\end{lemma}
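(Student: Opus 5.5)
Since $C_{2n+1}\square P_m$ is a subgraph of $C_{2n+1}\square P_n$ whenever $m\le n$, and restricting a B-coloring to a subgraph preserves both properness and the rainbow property of the surviving $4$-cycles, it suffices to treat $m=n$. I would therefore construct, for each $n\ge2$, an explicit four-color B-coloring of $C_{2n+1}\square P_n$ in the notation $h_j(i),v_j(i)$ just introduced. The counting in Lemma~\ref{lem:cycle-lower-bound} explains why $n$ rows is the limit. Fix a color $c$ and let $A_j$ and $B_j$ count its occurrences on the horizontal cycle of row $j$ and on the rungs between rows $j$ and $j+1$.
\begin{itemize}
\item At an interior row, every vertex sees all four colors, so $A_j+B_{j-1}+B_j=2n+1$.
\item Every square contains $c$ exactly once, so $A_j+A_{j+1}+2B_j=2n+1$.
\end{itemize}
These force $A_{j+1}-A_j$ to be the same odd constant $2T-(2n+1)$ for every $j$, where $T=A_j+B_j$. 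So each color's horizontal count moves by a fixed odd amount from row to row. On a cycle this is impossible, but on a path it is allowed as long as all counts stay in $[0,n]$, which is only possible for about $n$ rows. The construction should realize this extremal profile: two colors whose counts increase row by row and two whose counts decrease, with the boundary rows (degree $3$) absorbing the slack.

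Concretely, I would let each row cycle of length $2n+1$ be colored with three colors, one of them appearing exactly once (a ``defect''), and the rung between rows $j$ and $j+1$ at column $i$ be the fourth color missing at $(i,j)$. The defect position shifts diagonally as $j$ increases, so that the row colorings form a staircase. Equivalently, I would take a base pattern of period $2$ in $i$ (alternating two colors, as in the $h_0,v_0$ of Proposition~\ref{prop:odd-odd-upper}) and insert a ``seam'' whose column position moves by one per row. I would then check conditions (3.3)--(3.4) in two places:
\begin{itemize}
\item away from the seam, where the periodic pattern makes them immediate;
\item at the seam, where each check involves only two consecutive rows and a bounded number of columns.
\end{itemize}
This mirrors the verification style used in Lemma~\ref{lem:C4-odd-cycle}.

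An alternative that gives a cleaner argument is induction on $n$. Starting from a four-coloring of $C_{2n+1}\square P_n$, I would insert two new columns adjacent to the seam and add one new boundary row, obtaining $C_{2n+3}\square P_{n+1}$. Inserting a pair of columns of the form $(a,b)$ into an alternating region preserves every local condition away from the seam, so only a bounded neighbourhood needs rechecking. The base case $n=2$, namely $C_5\square P_2$, is a small explicit prism coloring.

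The main obstacle is the seam design. The defect of each odd row cycle must be placed so that the rungs it forces are consistent, so that the squares between consecutive rows stay rainbow, and so that the seam survives exactly $n$ rows before the count argument above makes it run out. I would first try small cases by hand ($n=2,3$) to guess the diagonal pattern, and then write general formulas with a finite seam table.
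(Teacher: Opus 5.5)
Your reduction to $m=n$ matches the paper's final restriction step, but the proposal never produces a coloring. The lemma is an existence statement whose whole content is an explicit construction plus its verification. ``Try $n=2,3$ by hand, guess the diagonal pattern, then write a seam table'' leaves exactly that undone. The induction variant has the same hole, since whether the new boundary row can be added is the entire question.

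Moreover, the concrete shape you commit to is ruled out by your own count once the count is stated correctly. Summing over the vertices of an interior row counts each horizontal edge twice, so the identity is $2A_j+B_{j-1}+B_j=2n+1$; you dropped the factor $2$. With the correct identity, for $m=n\ge2$ each color's row count changes by exactly $\pm1$ from row to row, with two colors increasing and two decreasing. So row $j$ has color counts $\{j,j+1,n-j,n-j\}$, or the reverse profile under $j\mapsto n-1-j$. Consequently a row colored with three colors in counts $(n,n,1)$ can only be the first or the last row. For $j$ near $n/2$, every color occurs about $n/2$ times in row $j$. Hence, for large $n$, the rows cannot all be a two-color alternation interrupted by one defect or by one drifting seam.

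The missing idea is that four colors leave no freedom beyond the first layer. Identify the colors with $\Z_2^2$, where the fourth color is the sum of the other three. Rainbow squares and properness at interior vertices then force $h_{j+1}(i)=h_j(i)+v_j(i)+v_j(i+1)$ and $v_{j+1}(i)=v_j(i)+h_{j+1}(i-1)+h_{j+1}(i)$. Your rule that a rung is ``the fourth color missing at $(i,j)$'' is the second of these. It says nothing at row $0$, where two colors are missing, and it leaves you no room to prescribe later rows independently. So everything rests on choosing $(h_0,v_0)$ and proving that the forced rows stay proper for $n$ rows.

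The paper takes $h_0$ alternating $3,2$ and $v_0$ alternating $0,1$ on $0\le i\le 2n-2$, with $(h_0,v_0)=(1,2)$ at $i=2n-1$ and $(2,3)$ at $i=2n$. Row $j$ then consists of three pieces:
\begin{itemize}
\item an alternating $\{0,1\}$-block of length $2j+1$;
\item a single entry $2$ at $i=j-1$;
\item an alternating $\{2,3\}$-block on $j\le i\le 2n-j-2$.
\end{itemize}
Away from the seams, the rungs use the complementary pair. These are two seams moving toward each other, and the closed forms (4.3)--(4.4) are verified by induction. The central block has length $2n-2j-1$ and disappears exactly after row $n-1$; this is the precise form of your heuristic that $n$ rows is the limit.
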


\begin{proof}
Identify the four colors with $\Z_2^2=\{0,1,2,3\}$, with addition interpreted bitwise. Define
\[
v_0(i)=
\begin{cases}
0,&0\le i\le2n-2\text{ and }i\text{ is even},\\
1,&0\le i\le2n-2\text{ and }i\text{ is odd},\\
2,&i=2n-1,\\
3,&i=2n,
\end{cases}
\]
and
\[
h_0(i)=
\begin{cases}
3,&0\le i\le2n-2\text{ and }i\text{ is even},\\
2,&0\le i\le2n-2\text{ and }i\text{ is odd},\\
1,&i=2n-1,\\
2,&i=2n.
\end{cases}
\]
Recursively set
\[
h_{j+1}(i)=h_j(i)+v_j(i)+v_j(i+1), \tag{4.1}
\]
and, whenever another vertical layer is needed,
\[
v_{j+1}(i)=v_j(i)+h_{j+1}(i-1)+h_{j+1}(i). \tag{4.2}
\]
Since the sum of three distinct elements of $\Z_2^2$ is the fourth element, (4.1) makes every new square rainbow and (4.2) makes every new interior vertex incident with four distinct colors.

The recurrence also gives the following explicit formulas. For $0\le j\le n-1$,
\[
v_j(i)=
\begin{cases}
2+((i+j)\bmod2),&0\le i<j,\\
(i-j)\bmod2,&j\le i\le2n-j-2,\\
2+((i-2n+j+1)\bmod2),&2n-j-1\le i\le2n,
\end{cases} \tag{4.3}
\]
and, for $1\le j\le n-1$,
\[
h_j(i)=
\begin{cases}
(i+j+1)\bmod2,&0\le i\le j-2,\\
2,&i=j-1,\\
3-((i-j)\bmod2),&j\le i\le2n-j-2,\\
1-((i-2n+j+1)\bmod2),&2n-j-1\le i\le2n.
\end{cases} \tag{4.4}
\]
On each alternating interval, (4.1)--(4.2) preserve the displayed pattern. Substitution at the four breakpoints verifies the exceptional entries and completes the induction. Equations (4.3)--(4.4) also show that both boundary rows are properly colored. Restricting the construction to the first $m$ rows proves the lemma.
\end{proof}

\begin{figure}[htbp]
\centering
\begin{tikzpicture}[x=0.95cm,y=1.25cm,scale=1.5,transform shape]
\foreach \i in {0,...,8}{\node[bvertex] (u\i) at (\i,1) {}; \node[bvertex] (v\i) at (\i,0) {};}
\foreach \i/\c in {0/4,1/3,2/4,3/3,4/4,5/3,6/4,7/2}{\draw[bedge\c] (u\i)--node[edgecode,above]{\pgfmathparse{int(\c-1)}\pgfmathresult} (u\the\numexpr\i+1\relax);}
\draw[bedge3] (u8)
  .. controls (8,1.36) and (0,1.36)
  .. node[edgecode,above]{2} (u0);
\foreach \i/\c in {0/3,1/4,2/3,3/4,4/3,5/4,6/2,7/1}{\draw[bedge\c] (v\i)--node[edgecode,below]{\pgfmathparse{int(\c-1)}\pgfmathresult} (v\the\numexpr\i+1\relax);}
\draw[bedge2] (v8)
  .. controls (8,-0.36) and (0,-0.36)
  .. node[edgecode,below]{1} (v0);
\foreach \i/\c in {0/1,1/2,2/1,3/2,4/1,5/2,6/1,7/3,8/4}{\draw[bedge\c] (u\i)--node[edgecode,right]{\pgfmathparse{int(\c-1)}\pgfmathresult} (v\i);}
\end{tikzpicture}
\caption{The initial two rows of the four-color construction for $C_9\square P_m$, where $m$ is an integer satisfying $2\le m\le4$. Edge colors are shown both by line color and by the labels $0,1,2,3$.}
\label{fig:odd-path-strip}
\end{figure}

\Needspace{8\baselineskip}
\begin{lemma}
\label{lem:odd-path-long-lower}
If $m$ and $n$ are integers satisfying $n\ge2$ and $m\ge n+1$, then $C_{2n+1}\square P_m$ has no four-color B-coloring.
\end{lemma}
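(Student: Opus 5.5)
We argue by contradiction through a counting argument on single color classes, in the spirit of Lemma~\ref{lem:cycle-lower-bound}. The difference from that lemma is that the boundary rows of the path factor prevent us from closing the recurrence cyclically. Instead, we will use the fact that each color class meets a row of the odd cycle in at most $n$ edges.

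Suppose $C_{2n+1}\square P_m$ has a four-color B-coloring, with $m\ge n+1$, and use the notation $h_j(i)$, $v_j(i)$ introduced above. Fix a color $c$. For $0\le j\le m-1$ let $a_j$ be the number of $i$ with $h_j(i)=c$, and for $0\le j\le m-2$ let $b_j$ be the number of $i$ with $v_j(i)=c$. Since row $j$ is a properly colored copy of $C_{2n+1}$, color $c$ forms a matching there, so $0\le a_j\le n$.

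We use two local facts, each following from the fact that only four colors are available:
\begin{itemize}
\item every interior vertex, that is, one in a row $1\le j\le m-2$, has degree four and therefore sees $c$ exactly once;
\item every square is rainbow and therefore contains $c$ exactly once.
\end{itemize}
Summing the first fact over row $j$, and the second over the squares between rows $j$ and $j+1$, gives
\[
2a_j+b_{j-1}+b_j=2n+1\quad(1\le j\le m-2),\qquad a_j+a_{j+1}+2b_j=2n+1\quad(0\le j\le m-2).
\]
Subtracting the vertex equation for row $j+1$ from the square equation for layer $j$ yields $a_j+b_j=a_{j+1}+b_{j+1}$ for $0\le j\le m-3$. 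Hence $T_c:=a_j+b_j$ is constant for $0\le j\le m-2$. Substituting $b_j=T_c-a_j$ into the square equation gives
\[
a_{j+1}-a_j=\delta_c:=2n+1-2T_c\qquad(0\le j\le m-2).
\]
So $a_0,\dots,a_{m-1}$ is an arithmetic progression, and its difference $\delta_c$ is an odd integer, in particular nonzero.

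The progression takes values in $[0,n]$, so its total span satisfies $|\delta_c|(m-1)\le n$. Since $m-1\ge n$ and $|\delta_c|\ge1$, this forces $m=n+1$ and $|\delta_c|=1$. The progression therefore runs exactly from $0$ to $n$ or from $n$ to $0$, and in particular $a_0\in\{0,n\}$.

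This holds for each of the four colors. Every edge of row $0$ receives some color, so
\[
\sum_c a_0^{(c)}=2n+1.
\]
If $k$ colors have $a_0=n$ and the others have $a_0=0$, then $kn=2n+1$. This is impossible for $n\ge2$, since $n$ divides $kn$ but does not divide $2n+1$. This contradiction proves the lemma.

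The main point to get right is the indexing at the boundary. Vertex equations are available only for rows $1,\dots,m-2$, while square equations are available for layers $0,\dots,m-2$. The derivation above uses exactly these ranges, and the progression for $a_j$ correctly covers all rows $0,\dots,m-1$, because the square equation for layer $m-2$ gives the last step from $a_{m-2}$ to $a_{m-1}$.
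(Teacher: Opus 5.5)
Your proof is correct and follows the paper's argument step for step. It uses the same row-wise vertex and square counts for a fixed color, the resulting arithmetic progression with odd common difference, and the matching bound $0\le a_j\le n$ forcing $a_0\in\{0,n\}$. It then closes with the same divisibility contradiction in row $0$; your explicit remarks that the span bound forces $m=n+1$, and your careful tracking of index ranges, only spell out what the paper leaves implicit.
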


\begin{proof}
Suppose that a four-color B-coloring exists. For a color $c$, let $A_j(c)$ be the number of horizontal edges of color $c$ in row $j$, and let $B_j(c)$ be the number of vertical edges of color $c$ between rows $j$ and $j+1$. Fix $c$ and abbreviate $A_j=A_j(c)$ and $B_j=B_j(c)$.

Counting color $c$ on the squares between two consecutive rows gives, for $0\le j\le m-2$,
\[
A_j+A_{j+1}+2B_j=2n+1, \tag{4.5}
\]
while counting it on the edges incident with the vertices of an interior row gives, for $1\le j\le m-2$,
\[
2A_j+B_{j-1}+B_j=2n+1. \tag{4.6}
\]

Comparing (4.5) for $j-1$ with (4.6) for $j$ shows that $A_j+B_j$ is independent of $j$. Comparing (4.5) with (4.6) in the other direction shows that $A_{j+1}+B_j$ is also independent of $j$. The difference of these two constants is $A_{j+1}-A_j$, and hence $A_0,A_1,\ldots,A_{m-1}$ is an arithmetic progression. Its common difference is odd, since the sum of the constants is $2n+1$.

The edges of color $c$ in a cycle row form a matching, so $0\le A_j\le n$. Since the progression has at least $n+1$ terms, its odd common difference must be $1$ or $-1$, and the same bounds force $A_0(c)\in\{0,n\}$. Summing over the four colors in the first row gives a multiple of $n$, but that row has $2n+1$ edges, a contradiction.
\end{proof}

\begin{lemma}
\label{lem:C3-path}
For every integer $m\ge2$, $C_3\square P_m$ has no four-color B-coloring.
\end{lemma}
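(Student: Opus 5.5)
The plan is to reduce to $m=2$ and then run the counting argument of Lemma~\ref{lem:odd-path-long-lower} with $n=1$. That lemma assumes $n\ge2$, but its equation (4.5) holds for every $n$, and for $n=1$ it already gives a contradiction with only two rows.

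\emph{Reduction.} For every $m\ge2$, the graph $C_3\square P_2$ is a subgraph of $C_3\square P_m$, namely the first two rows. Restricting a B-coloring to a subgraph keeps it proper and keeps every $4$-cycle of the subgraph rainbow. So it suffices to show that the triangular prism $C_3\square P_2$ has no four-color B-coloring.

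\emph{Counting.} Suppose such a coloring exists, and use the notation $A_0(c),A_1(c),B_0(c)$ from the proof of Lemma~\ref{lem:odd-path-long-lower} with $n=1$.

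1. Each of the three squares between the two rows is rainbow, so color $c$ appears on it at most once. I claim it appears exactly once. At each vertex of the prism the three incident edges have distinct colors, one of them vertical. Counting incidences of $c$ over the three squares gives
\[
A_0(c)+A_1(c)+2B_0(c)\le3 .
\]
Summing this over the four colors gives $3+3+2\cdot3=12$ on the left and $4\cdot3=12$ on the right. Hence every inequality is an equality, which is exactly (4.5) with $2n+1=3$.

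2. By (4.5), $A_0(c)+A_1(c)$ is odd for every color $c$.

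3. The edges of color $c$ in a triangle row form a matching, so $A_j(c)\le1$. Together with step 2 this forces $A_0(c)+A_1(c)=1$ for every $c$, and then (4.5) gives $B_0(c)=1$ for every $c$.

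4. Summing $B_0(c)=1$ over the four colors gives $4$. But there are only three vertical edges, so $\sum_c B_0(c)=3$, a contradiction. Equivalently, $\sum_c\bigl(A_0(c)+A_1(c)\bigr)=4\ne6$.

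\emph{Main point to check.} The only step needing care is justifying (4.5) when $n=1$, since Lemma~\ref{lem:odd-path-long-lower} was stated for $n\ge2$. The double-counting identity itself does not use $n\ge2$: it only uses that each square contains each of the four colors exactly once. That follows because the squares are rainbow $4$-cycles colored from a palette of size four. With that settled, the rest is the short parity and matching argument above.
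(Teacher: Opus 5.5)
Your proof is correct, but it takes a different route from the paper's. Both arguments reduce to the prism $C_3\square P_2$. The paper then normalizes one triangle to colors $1,2,3$ and notes that properness leaves each rung two possible colors. The square conditions cut the rung triples down to four, and in each case the colors forced on the second triangle are not proper. You instead double count. Each of the three squares contains each of the four colors exactly once, so $A_0(c)+A_1(c)+2B_0(c)=3$. A triangle has no two disjoint edges, so $A_j(c)\le1$. Parity then forces $A_0(c)+A_1(c)=1$, hence $B_0(c)=1$ for all four colors, which is impossible with only three rungs.

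Your version avoids any case enumeration. It also shows that the counting framework of Lemma~\ref{lem:odd-path-long-lower} does reach $n=1$, but only with your new final step. The closing step there (the first-row sum is a multiple of $n$) gives no contradiction when $n=1$, so your replacement is genuinely needed. Conversely, your two-row finish depends on the matching bound being as tight as $A_j(c)\le1$, so it does not extend to $n\ge2$. This is consistent with Lemma~\ref{lem:odd-path-short}, which gives a four-color B-coloring of $C_5\square P_2$. The paper's enumeration is fully explicit and needs no counting identities, at the cost of a small case check.

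The remark about vertex incidences in your step~1 is never used; the inequality comes from the rainbow squares alone.
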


\begin{proof}
It suffices to consider $C_3\square P_2$. Relabel the colors so that one triangular layer has edge colors $1,2,3$ in cyclic order. Properness restricts the three joining edges to the sets $\{2,4\}$, $\{3,4\}$, and $\{1,4\}$, respectively. The square conditions leave only the joining triples
\[
(2,3,1),\ (2,3,4),\ (2,4,1),\ (4,3,1).
\]
The corresponding forced color triples on the second triangle are, respectively,
\[
(4,4,4),\ (4,1,1),\ (3,3,4),\ (2,4,2),
\]
none of which is proper.
\end{proof}

\begin{proof}[Proof of Theorem~\ref{thm:odd-cycle-path}]
When $2\le m\le n$, Lemma~\ref{lem:odd-path-short} gives the upper bound four, and the graph contains a $4$-cycle. If $m\ge n+1$ and $n\ge2$, Lemma~\ref{lem:odd-path-long-lower} gives the lower bound five; the case $n=1$ follows from Lemma~\ref{lem:C3-path}. For the upper bound, embed $P_m$ in any odd cycle of length at least $m+1$ and restrict the five-coloring supplied by Theorem~\ref{thm:cycle-products}.
\end{proof}

\subsection{Even circumference}

\begin{lemma}
\label{lem:P2-even-cycle}
For every integer $n\ge2$, $q_B(P_2\square C_{2n})=4$.
\end{lemma}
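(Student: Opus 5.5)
Since $P_2\square C_{2n}$ is $3$-regular, the degree bound alone only gives three colors. For $n\ge3$, however, $C_{2n}$ contains no $4$-cycle, so the only $4$-cycles of $P_2\square C_{2n}$ are the $2n$ product squares. For $n=2$ the graph is $Q_3$, and Theorem~\ref{thm:GS-path-products} already gives $q_B(Q_3)=4$. So the plan is to prove the lower bound $q_B\ge4$ for $n\ge3$ by a local argument, and then to give an explicit four-color construction valid for every $n\ge2$.

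\emph{Lower bound.} Suppose there is a three-color B-coloring. Every vertex has degree three, so each vertex sees all three colors. Consider a product square with rungs $e=(i,0)(i,1)$ and $e'=(i+1,0)(i+1,1)$ and cycle edges $f=(i,0)(i+1,0)$, $f'=(i,1)(i+1,1)$. The square must be rainbow, which is impossible with four edges and only three colors. Hence $q_B\ge4$ for every $n\ge2$, with no need to use the structure of $C_{2n}$ at all.

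\emph{Upper bound.} I would color the rungs alternately with colors $1,2$, that is, $v(i)=1+(i\bmod2)$, which is well defined because the length $2n$ is even. On the two cycle copies I would use colors $3,4$ alternately, in opposite phase on the two layers. Concretely, set $h_0(i)=3+(i\bmod2)$ and $h_1(i)=4-(i\bmod2)$, again consistent around the even cycle.

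\emph{Verification.}
\begin{enumerate}
\item Properness: each vertex $(i,j)$ sees $h_j(i-1)$ and $h_j(i)$, which are $\{3,4\}$ in some order, together with one rung color from $\{1,2\}$. So its three edges are distinct.
\item Squares: square $i$ has rung colors $v(i)\neq v(i+1)$, which are $\{1,2\}$, and cycle colors $h_0(i)\ne h_1(i)$, which are $\{3,4\}$. So every square is rainbow.
\item The case $n=2$: the two cycle copies $C_4$ are themselves $4$-cycles, and each is colored $3,4,3,4$, which is not rainbow. I would instead invoke Theorem~\ref{thm:GS-path-products} for $Q_3$ directly.
\end{enumerate}

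The main (mild) obstacle is this $n=2$ case, where the cycle factor itself contains a $4$-cycle that must also be rainbow. For $n\ge3$ there are no such extra $4$-cycles, since a cycle of length at least $6$ contains none. Alternatively, for every $n\ge2$ the cross inequality (Lemma~\ref{lem:cross-inequality}) with $G=P_2$ ($p=1$, $\chi=2$) and $H=C_{2n}$ fails, because it needs $\chi(H)=2\le p=1$. So I would rely on the explicit construction above together with the citation for $Q_3$.
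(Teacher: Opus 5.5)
Your proof is correct, but it uses a different construction from the paper's. The paper gives one explicit mixed-palette pattern (4.7) for all $n\ge2$. The rungs receive $1,4,3,4,3,\dots,3,4$, the layer edges use colors from $\{1,2,3\}$, and columns $0$ and $N-1$ serve as seams. The paper then checks only properness and the $2n$ product squares.

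You instead use disjoint palettes: $\{1,2\}$ alternating on the rungs, and $\{3,4\}$ alternating in opposite phase on the two layers. This is exactly the construction in the proof of Lemma~\ref{lem:cross-inequality}, with $P_2$ given a two-color palette that is shifted according to the vertex color of $C_{2n}$. The hypotheses $\chi(P_2)=2\le2$ and $\chi(C_{2n})=2\le2$ then hold for $n\ge3$. So your remark that the cross inequality ``fails'' comes only from insisting on $p=1$. Your version is simpler, needs no seam, and makes the verification immediate.

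Your separate treatment of $n=2$ is genuinely necessary, and it also covers a case the paper's argument misses. For $N=4$, pattern (4.7) colors the two $C_4$ layers $2,1,2,3$ and $3,2,1,2$. These are $4$-cycles of $Q_3$ that are not rainbow, so (4.7) is not a B-coloring of $P_2\square C_4$, and the paper's verification overlooks them. Closing that case by citing $q_B(Q_3)=4$ from Theorem~\ref{thm:GS-path-products}, as you do, is the right fix. Your lower-bound argument, that any $4$-cycle forces four colors, matches the paper's.
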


\begin{proof}
Let $N=2n$ and use coordinates $\{0,1\}\times\Z_N$. For $i\in\Z_N$, let $d_i$ be the color of the $P_2$-edge in column $i$, and let $x_i,y_i$ be the colors of the two cycle edges from column $i$ to column $i+1$. Define
\[
(d_i,x_i,y_i)=
\begin{cases}
(1,2,3),&i=0,\\
(4,1,2),&i\text{ is odd and }1\le i\le N-3,\\
(3,2,1),&i\text{ is even and }2\le i\le N-2,\\
(4,3,2),&i=N-1.
\end{cases} \tag{4.7}
\]
For each $i$, the square between columns $i$ and $i+1$ has colors $x_i,y_i,d_i,d_{i+1}$, namely all four colors, and the three colors incident with each vertex are distinct. This includes the cyclic closure from column $N-1$ to column $0$, so (4.7) is a four-color B-coloring. The graph contains a $4$-cycle, so four colors are necessary.
\end{proof}

\begin{figure}[htbp]
\centering
\begin{tikzpicture}[x=0.9cm,y=1.25cm,scale=1.5,transform shape]
\foreach \i in {0,...,9}{\node[bvertex] (a\i) at (\i,1) {}; \node[bvertex] (b\i) at (\i,0) {};}
\foreach \i/\cx/\cy in {0/2/3,1/1/2,2/2/1,3/1/2,4/2/1,5/1/2,6/2/1,7/1/2,8/2/1}{
\draw[bedge\cx] (a\i)--node[edgecode,above]{\cx} (a\the\numexpr\i+1\relax);
\draw[bedge\cy] (b\i)--node[edgecode,below]{\cy} (b\the\numexpr\i+1\relax);
}
\draw[bedge3] (a9)
  .. controls (9,1.36) and (0,1.36)
  .. node[edgecode,above]{3} (a0);
\draw[bedge2] (b9)
  .. controls (9,-0.36) and (0,-0.36)
  .. node[edgecode,below]{2} (b0);
\foreach \i/\cd in {0/1,1/4,2/3,3/4,4/3,5/4,6/3,7/4,8/3,9/4}{\draw[bedge\cd] (a\i)--node[edgecode,right]{\cd} (b\i);}
\end{tikzpicture}
\caption{The four-color pattern (4.7) on $P_2\square C_{10}$. The same two-column pattern is repeated for every even cycle length, with the two prescribed seam columns.}
\label{fig:P2-even-cycle}
\end{figure}

\begin{proof}[Proof of Theorem~\ref{thm:cylindrical-grids}]
The assertion for $C_{2n+1}\square P_m$ is Theorem~\ref{thm:odd-cycle-path}. Now let $n\ge2$. When $m=2$, Lemma~\ref{lem:P2-even-cycle} gives $q_B(C_{2n}\square P_m)=4$. Suppose that $m\ge3$. If $n=2$, then
\[
C_4\square P_m\cong P_2\square P_2\square P_m,
\]
and Theorem~\ref{thm:GS-path-products} gives $q_B(C_4\square P_m)=4$. If $n\ge3$, neither $C_{2n}$ nor $P_m$ contains a $4$-cycle, and each has a proper two-edge-coloring; hence each has a two-color B-coloring. Both factors have chromatic number two, and Theorem~\ref{thm:sabidussi} gives the same value for their product. Thus Lemma~\ref{lem:cross-inequality} applies and gives $q_B(C_{2n}\square P_m)\le4$. Since the product has maximum degree four, equality follows.
\end{proof}

\Needspace{10\baselineskip}
\section{Exceptional even discrete tori}
\label{sec:exceptional-even}
The key step is a six-color construction that is uniform in two arbitrary odd cycle lengths.

\begin{lemma}
\label{lem:exceptional-base}
For all integers $n_1,n_2\ge1$,
\[
q_B(P_2\square P_2\square C_{2n_1+1}\square C_{2n_2+1})=6.
\]
\end{lemma}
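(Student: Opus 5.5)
The lower bound is immediate. The graph $P_2\square P_2\square C_{2n_1+1}\square C_{2n_2+1}$ is $6$-regular, since each $P_2$ factor contributes degree one and each cycle contributes degree two. Hence $q_B\ge\Delta=6$. All the work is in a six-color construction that must be uniform in the two odd lengths $p=2n_1+1$ and $q=2n_2+1$.

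The general tools do not give six colors directly. Writing the graph as $(P_2\square P_2)\square X$ with $X=C_p\square C_q$, the cross inequality (Lemma~\ref{lem:cross-inequality}) gives only $4+5$ colors. Splitting it as $(P_2\square C_p)\square(P_2\square C_q)$ gives at best $4+4$. Lemma~\ref{lem:cover-pullback} cannot reduce the problem to finitely many base lengths either, because odd lengths are not all multiples of a fixed finite set of odd lengths. So I would build the coloring by hand.

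I would take a five-color B-coloring $c$ of $X$ from Proposition~\ref{prop:odd-odd-upper} (or Theorem~\ref{thm:cycle-products}) and write $m(x)$ for the unique color of $\{1,\dots,5\}$ missing at $x$. The four copies of $X$ are indexed by $(a,b)\in\Z_2^2$, and the extra color $6$ is added. The naive rule, coloring every copy by $c$ or by a fixed permutation $\sigma\circ c$, fails. At $((a,b),x)$ the two $C_4$-edges must take exactly the two colors missing from the $X$-edges there. In copies colored by $c$ these are $\{m(x),6\}$, and in copies colored by $\sigma\circ c$ they are $\{\sigma(m(x)),6\}$. The two $C_4$-edges at a vertex then cannot both lie in the intersection $\{6\}$. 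The remedy is to let each $X$-copy use only four of the five colors $1,\dots,5$ on some edges and bring color $6$ into the $X$-directions. Concretely, in copy $(a,b)$ I would recolor the edges of $c$ of one chosen color class $k_{a,b}$ with color $6$. Then at $((a,b),x)$ the missing set becomes $\{m(x),k_{a,b}\}$ when $x$ meets color $k_{a,b}$, and $\{m(x),6\}$ otherwise.

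With this in place, the plan is to choose the four indices $k_{a,b}$ (for example $k_{0,0}=k_{1,1}$ and $k_{1,0}=k_{0,1}$ distinct) together with a permutation of $c$ in the odd-indexed copies. The goal is that for every edge $(a,b)(a',b')$ of $C_4$ and every $x$, the two missing pairs share a common color. Matching the $C_4$ directions around the $4$-cycle then colors the $C_4$-edges consistently.

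Four families of conditions must then be checked:
\begin{itemize}
\item properness at every vertex;
\item rainbowness of the squares inside each $X$-copy, which is inherited from $c$ because recoloring a whole color class by a new color preserves B-coloring;
\item rainbowness of the $C_4$ itself at each $x$, which needs four distinct colors on the four $C_4$-edges at $x$;
\item rainbowness of the mixed squares formed by a $C_4$-edge and an $X$-edge $xy$: the two $X$-copies of $xy$ must differ in color, and the two $C_4$-edges at $x$ and $y$ must differ and avoid these.
\end{itemize}

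The main obstacle is the third and fourth families. The missing-color function $m$ of an odd-odd five-coloring is not a proper vertex coloring pattern one can control freely. If no uniform choice works, I would exploit the explicit structure of Proposition~\ref{prop:odd-odd-upper}: away from the seams, $c$ is periodic with $h_0\in\{4,5\}$ and $v_0\in\{1,2,3\}$, so the horizontal and vertical palettes are separated. I would then design the four copies so that copies adjacent in one $P_2$ direction swap the roles $4\leftrightarrow5$ (shifting the alternating pattern by one step), and copies adjacent in the other direction cyclically shift $1,2,3$. The two $P_2$ directions can then use the freed color $6$ and the locally missing color, alternating according to parity of $i+j$. Finally, I would repair the seam rows and columns by a finite table check, exactly as in the appendix data for Proposition~\ref{prop:odd-odd-upper}. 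This finite seam verification is where I expect the real difficulty.
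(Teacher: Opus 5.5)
Your lower bound is correct, but the upper bound is never established, and the scheme you propose cannot be completed for any choice of the $k_{a,b}$ and of the permutations. Write the graph as $C_4\square X$ with $X=C_{2n_1+1}\square C_{2n_2+1}$, and suppose that in some six-color B-coloring a copy $X_{a,b}$ of $X$ avoids a color $\alpha$. The graph is $6$-regular, so all six colors occur at each vertex $((a,b),x)$, and $\alpha$ lies on exactly one of its two $C_4$-edges. Let $I$ (resp.\ $J$) be the set of $x$ for which this is the edge in the first (resp.\ second) $P_2$-direction. For an edge $xy$ of $X$, consider the square formed by the copies of $xy$ in $X_{a,b}$ and $X_{1-a,b}$ together with the first-direction edges at $x$ and $y$. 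It must be rainbow, so $x$ and $y$ are not both in $I$; similarly for $J$. Hence $V(X)=I\sqcup J$ is a bipartition of $X$, which is impossible because $X$ contains an odd cycle. So in any six-color B-coloring \emph{every} copy of $X$ carries all six colors. In your plan each copy is a five-coloring with one class renamed $6$ (possibly permuted). It therefore uses exactly five colors and is excluded outright. The obstruction lies in the basic design, not at the seams.

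The fallback also fails, already in the periodic interior. There your copies are injective color maps $\psi_{a,b}$ of $(h_0,v_0)$. Let $\mu_{a,b}$ be the color not in the image of $\psi_{a,b}$, so $\mu_{a,b}=6$ if no class is renamed. Mixed squares force $\psi_{0,0}(\gamma)\ne\psi_{1,0}(\gamma)$ for every $\gamma$. The $C_4$-edge joining copies $(0,0)$ and $(1,0)$ over $x$ needs a color in $\{\psi_{0,0}(m(x)),\mu_{0,0}\}\cap\{\psi_{1,0}(m(x)),\mu_{1,0}\}$. If $\mu_{0,0}=\mu_{1,0}$, this set is always $\{\mu_{0,0}\}$, which breaks the mixed squares. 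Otherwise, by injectivity, it is nonempty for at most two values of $m(x)$, whereas $m$ runs through $1,2,3$ on any three consecutive rows there.

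The missing idea is to give every copy all six colors by splitting the palette between the two odd directions. The paper works in $\Z_6$. In copies $(0,0)$ and $(1,1)$ the $C_{2n_1+1}$-edges get even colors and the $C_{2n_2+1}$-edges get odd colors, with the parities swapped in the other two copies. Each vertex then leaves one even and one odd color for its two $C_4$-edges. Uniformity in $n_1,n_2$ comes from four layer types whose profiles are shifts by $2i$ of fixed four-tuples and alternate after three initial entries, so only finitely many local checks remain.
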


\begin{proof}
Set $r=2n_1+1$ and $q=2n_2+1$, and let
$G_{n_1,n_2}=P_2\square P_2\square C_{2n_1+1}\square C_{2n_2+1}$. We identify
\[
V(G_{n_1,n_2})=\{0,1\}^2\times\Z_r\times\Z_q.
\]
Represent each vertex as $(\varepsilon,\delta,x,y)$, where $\varepsilon,\delta\in\{0,1\}$, $x\in\Z_r$, and $y\in\Z_q$. Denote the four edge directions by
\[
\begin{aligned}
e_0(\delta,x,y)&=(0,\delta,x,y)(1,\delta,x,y),\\
e_1(\varepsilon,x,y)&=(\varepsilon,0,x,y)(\varepsilon,1,x,y),\\
f(\varepsilon,\delta,x,y)&=(\varepsilon,\delta,x,y)(\varepsilon,\delta,x+1,y),\\
g(\varepsilon,\delta,x,y)&=(\varepsilon,\delta,x,y)(\varepsilon,\delta,x,y+1).
\end{aligned}
\]
We use the color set $\Z_6$. Addition of a scalar to a four-tuple is componentwise in $\Z_6$. For fixed $(x,y)$, record the colors in the following order:
\[
\begin{aligned}
F_{x,y}&=(c(e_0(0,x,y)),c(e_1(1,x,y)),c(e_0(1,x,y)),c(e_1(0,x,y))),\\
X_{x,y}&=(c(f(0,0,x,y)),c(f(1,0,x,y)),c(f(0,1,x,y)),c(f(1,1,x,y))),\\
Y_{x,y}&=(c(g(0,0,x,y)),c(g(1,0,x,y)),c(g(0,1,x,y)),c(g(1,1,x,y))).
\end{aligned}
\]

The basic four-tuples are listed in Table~\ref{tab:exceptional-basic-blocks}. For $i\in\Z_3$, define $p_i=p+2i$, $\bar p_i=\bar p+2i$, $a_i=a+2i$, $b_i=b+2i$, $u_i=u+2i$, and $v_i=v+2i$.

\begin{table}[htbp]
\centering
\caption{Basic four-tuples for the six-color construction; all entries are in $\Z_6$.}
\label{tab:exceptional-basic-blocks}
\begin{tabular}{c@{\qquad}c|c@{\qquad}c}
\toprule
symbol & value & symbol & value\\
\midrule
$p$ & $(3,4,5,2)$ & $\bar p$ & $(4,5,2,3)$\\
$d_0$ & $(2,3,4,5)$ & $d_1$ & $(5,2,3,4)$\\
$d_2$ & $(3,0,1,2)$ & $d_3$ & $(1,4,5,0)$\\
$a$ & $(0,1,3,2)$ & $b$ & $(2,3,1,0)$\\
$u$ & $(1,2,0,3)$ & $v$ & $(3,0,2,1)$\\
\bottomrule
\end{tabular}
\end{table}

Let $s=(r-3)/2$, so $s\ge0$. For four-tuples $U,V$, let $(U,V)^s$ denote $s$ consecutive copies of the pair $U,V$; this part is omitted when $s=0$. We use four layer types, denoted by $S_0,S_1,S_2,S_3$. If the $y$th $C_q$-layer has type $S_i$, its $F$- and $X$-profiles are the two words in row $S_i$ of Table~\ref{tab:exceptional-states}, read in the order $x=0,1,\ldots,r-1$.

\begin{table}[htbp]
\centering
\caption{The four layer types. Each profile has length $r=2n_1+1=2s+3$.}
\label{tab:exceptional-states}
\begin{tabular}{c|c|c}
\toprule
type & $(F_{x,y})_{x\in\Z_r}$ & $(X_{x,y})_{x\in\Z_r}$\\
\midrule
$S_0$ & $(p_0,p_1,p_2,(p_0,d_0)^s)$ & $(a_0,a_1,a_2,(a_0,a_2)^s)$\\
$S_1$ & $(\bar p_0,\bar p_1,\bar p_2,(\bar p_0,d_1)^s)$ & $(b_0,b_1,b_2,(b_0,b_2)^s)$\\
$S_2$ & $(\bar p_2,\bar p_0,\bar p_1,(\bar p_2,d_2)^s)$ & $(b_2,b_0,b_1,(b_2,b_1)^s)$\\
$S_3$ & $(\bar p_1,\bar p_2,\bar p_0,(\bar p_1,d_3)^s)$ & $(b_1,b_2,b_0,(b_1,b_0)^s)$\\
\bottomrule
\end{tabular}
\end{table}

Table~\ref{tab:exceptional-transitions} gives the colors on the edges joining two consecutive layers. If a layer of type $S_i$ is followed by one of type $S_j$, the corresponding row prescribes the $Y$-profile in the order $x=0,1,\ldots,r-1$.

\begin{table}[htbp]
\centering
\caption{The $Y$-profiles on edges joining consecutive layers.}
\label{tab:exceptional-transitions}
\begin{tabular}{c|c}
\toprule
consecutive layer types & $(Y_{x,y})_{x\in\Z_r}$\\
\midrule
$S_0\to S_1$, $S_3\to S_1$ & $(u_0,u_1,u_2,(u_0,v_0)^s)$\\
$S_1\to S_0$, $S_1\to S_2$ & $(u_2,u_0,u_1,(u_2,v_2)^s)$\\
$S_2\to S_3$ & $(u_1,u_2,u_0,(u_1,v_1)^s)$\\
\bottomrule
\end{tabular}
\end{table}

Let $t=(q-3)/2$, so $t\ge0$. Assign the following types to the $C_q$-layers in cyclic order:
\[
S_1,\quad
\underbrace{S_0,S_1,\ldots,S_0,S_1}_{t\text{ copies of the pair }S_0,S_1},
\quad S_2,\quad S_3.
\]
The repeated part is omitted when $t=0$. Every consecutive pair appears in Table~\ref{tab:exceptional-transitions}, including $S_3,S_1$ at the cyclic closure, so the table completes the definition of the coloring. Figure~\ref{fig:exceptional-state-digraph} illustrates this layer order.

In each profile, the first three entries are prescribed separately and the rest alternate between two fixed four-tuples. It therefore suffices to check consecutive $x$-coordinates among the first three positions, at the junction with the alternating part, within its two possible adjacencies, and at the cyclic closure; these checks are independent of $s$.

For a vertex $(\varepsilon,\delta,x,y)$, substitution in Table~\ref{tab:exceptional-states} shows that the colors on
\[
e_0(\delta,x,y),\quad e_1(\varepsilon,x,y),\quad
f(\varepsilon,\delta,x-1,y),\quad f(\varepsilon,\delta,x,y)
\]
are distinct. The same substitutions show that the three types of coordinate $4$-cycle contained in one layer are rainbow. Next, for each row of Table~\ref{tab:exceptional-transitions}, the color of a joining $g$-edge differs from the four non-$g$ colors at both endpoints, and the three types of coordinate $4$-cycle containing that edge are rainbow.

It remains to compare the two $Y$-profiles entering and leaving a layer. The cyclic order has at most seven local three-layer patterns: two in the repeated part and the others at its ends or around $S_1,S_2,S_3$. Entrywise comparison in Table~\ref{tab:exceptional-transitions} shows that the incoming and outgoing $g$-edges have different colors in every case.

Thus the six edges incident with every vertex have pairwise distinct colors. Since each factor is $4$-cycle-free, every $4$-cycle of $G_{n_1,n_2}$ is a coordinate square and hence is rainbow by the preceding checks. This gives a six-color B-coloring. Since $\Delta(G_{n_1,n_2})=6$, properness gives the matching lower bound.
\end{proof}

\begin{figure}[htbp]
\centering
\begin{tikzpicture}[node distance=13mm,scale=1.25,transform shape]
\node[statebox] (S1) {$S_1$};
\node[statebox,right=of S1,minimum width=25mm] (repeat) {$S_0\longrightarrow S_1$};
\node[above=2mm of repeat,font=\footnotesize] {$t$ repetitions};
\node[statebox,right=of repeat] (S2) {$S_2$};
\node[statebox,right=of S2] (S3) {$S_3$};
\draw[-{Latex[length=2.5mm]},thick] (S1) -- (repeat);
\draw[-{Latex[length=2.5mm]},thick] (repeat) -- (S2);
\draw[-{Latex[length=2.5mm]},thick] (S2) -- (S3);
\draw[-{Latex[length=2.5mm]},thick] (S3.south)
  .. controls +(0,-9mm) and +(0,-9mm)
  .. node[below=1mm,font=\footnotesize]{cyclic closure} (S1.south);
\end{tikzpicture}
\caption{The cyclic order of the $C_q$-layers. Starting from $S_1$, the pair $S_0,S_1$ is repeated $t=(q-3)/2$ times, followed by $S_2,S_3$ and the return to $S_1$. The repeated part is absent when $t=0$.}
\label{fig:exceptional-state-digraph}
\end{figure}

\begin{corollary}
\label{cor:exceptional-triple}
For all integers $n_1,n_2\ge1$,
\[
q_B(C_4\square C_{4n_1+2}\square C_{4n_2+2})=6.
\]
\end{corollary}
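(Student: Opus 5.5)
The plan is to deduce the corollary directly from Lemma~\ref{lem:exceptional-base} and Lemma~\ref{lem:cover-pullback}, since the upper bound is the only nontrivial direction.

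\textbf{Lower bound.} The graph $C_4\square C_{4n_1+2}\square C_{4n_2+2}$ is $6$-regular, so properness alone forces $q_B\ge 6$.

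\textbf{Upper bound.} First, use the isomorphism $C_4\cong P_2\square P_2$ to write
\[
C_4\square C_{4n_1+2}\square C_{4n_2+2}\cong P_2\square P_2\square C_{2(2n_1+1)}\square C_{2(2n_2+1)}.
\]
Second, Lemma~\ref{lem:exceptional-base} gives a six-color B-coloring of $P_2\square P_2\square C_{2n_1+1}\square C_{2n_2+1}$. Third, apply Lemma~\ref{lem:cover-pullback} with $H=P_2\square P_2$, $d=2$, cycle lengths $2n_1+1,2n_2+1\ge3$, and $t_1=t_2=2$. This repeats the coloring twice in each odd-cycle coordinate and yields a six-color B-coloring of $H\square C_{4n_1+2}\square C_{4n_2+2}$.

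The only point needing care is that the $4$-cycles inside the $H$-copies, namely the $C_4$ factor, are handled correctly by the pullback. Lemma~\ref{lem:cover-pullback} explicitly covers $4$-cycles lying within a copy of $H$, so there is no real obstacle. The cycle lengths $4n_i+2\ge6$ create no new $4$-cycles, so the lemma applies verbatim. Combining the two bounds gives $q_B=6$.
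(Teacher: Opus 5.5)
Your proposal is correct and matches the paper's proof. The paper also lifts the six-color B-coloring of Lemma~\ref{lem:exceptional-base} through Lemma~\ref{lem:cover-pullback} with $t_1=t_2=2$ and odd cycle lengths $2n_i+1\ge3$, uses $P_2\square P_2\cong C_4$, and takes the lower bound from $6$-regularity.
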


\begin{proof}
By Lemma~\ref{lem:cover-pullback}, repeating the coloring in Lemma~\ref{lem:exceptional-base} twice in each cyclic coordinate gives a six-color B-coloring of $P_2\square P_2\square C_{4n_1+2}\square C_{4n_2+2}$. Since $P_2\square P_2\cong C_4$, this gives the required upper bound. The lower bound is the maximum degree.
\end{proof}

\begin{proof}[Proof of Theorem~\ref{thm:exceptional-even-classification}]
The cases $d=1$ and $d=2$ are Theorem~\ref{thm:GS-even-products}. Suppose $d\ge3$. Since the discrete torus is exceptional, $C_{2n_1}=C_4$ and $n_2,\ldots,n_d$ are odd. By Corollary~\ref{cor:exceptional-triple},
\[
q_B(C_4\square C_{2n_2}\square C_{2n_3})=6.
\]
Every remaining even cycle has chromatic number and B-coloring number equal to two. By Theorem~\ref{thm:sabidussi}, the three-factor product above and every partial product obtained by adjoining further even cycles also have chromatic number two. Hence Lemma~\ref{lem:cross-inequality} applies repeatedly to the six-color base block and the two-color cycle factors, giving
\[
q_B(C_{2n_1}\square\cdots\square C_{2n_d})\le6+2(d-3)=2d.
\]
The product is $2d$-regular, so equality follows.
\end{proof}

\section{Even \texorpdfstring{$\ell$}{ell}-cylindrical grids}
\label{sec:path-products}

Together, Theorems~\ref{thm:GS-even-products} and~\ref{thm:exceptional-even-classification} yield the following classification.

\begin{corollary}
\label{cor:all-even-products}
Let $E$ be a Cartesian product of even cycles. Then $q_B(E)=\Delta(E)$ unless $E\cong C_4$ or $E\cong C_4\square C_{4n+2}$ for some integer $n\ge1$. In the two exceptional cases, $q_B(C_4)=4$ and $q_B(C_4\square C_{4n+2})=5$.
\end{corollary}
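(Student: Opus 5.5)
This corollary is a case split according to the factors of $E$. Write $E=C_{2n_1}\square\cdots\square C_{2n_d}$ with $2n_1\le\cdots\le 2n_d$ and $n_i\ge2$, so that $\Delta(E)=2d$. Properness always gives $q_B(E)\ge\Delta(E)$, so in every case only the upper bound, or the exceptional value, needs an argument.

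\textbf{Non-exceptional tori.} If $E$ is non-exceptional in the sense of Theorem~\ref{thm:GS-even-products}, that theorem gives $q_B(E)=2d=\Delta(E)$ directly.

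\textbf{Exceptional tori.} Otherwise $n_1=2$ and $n_2,\ldots,n_d$ are odd, and Theorem~\ref{thm:exceptional-even-classification} settles all dimensions.
\begin{itemize}
\item For $d=1$ we have $E=C_4$ and $q_B=4$.
\item For $d=2$ we have $E=C_4\square C_{2n_2}$ with $n_2$ odd and $n_2\ge2$, hence $n_2\ge3$. Writing $2n_2=4k+2$ with $k\ge1$ gives $E\cong C_4\square C_{4k+2}$ and $q_B=5$.
\item For $d\ge3$ the theorem gives $q_B(E)=2d=\Delta(E)$.
\end{itemize}

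\textbf{Consistency of the two exceptional families.}
\begin{itemize}
\item $C_4$ itself is exceptional with $d=1$. Here $q_B(C_4)=4$ while $\Delta(C_4)=2$, so it is a genuine exception to $q_B=\Delta$.
\item Any $C_4\square C_{4n+2}$ with $n\ge1$ is exceptional with $d=2$. It has $q_B=5$ while $\Delta=4$, so it is also a genuine exception.
\end{itemize}
No other product of even cycles falls outside the two situations above: either it is non-exceptional, or it is exceptional with $d\ge3$. In both cases $q_B(E)=\Delta(E)$.

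Nothing here is an obstacle. The only point needing care is the translation between the ``exceptional'' definition and the explicit forms $C_4$ and $C_4\square C_{4n+2}$, in particular that $n_2$ odd with $n_2\ge2$ forces $n_2\ge3$, so that $k\ge1$.
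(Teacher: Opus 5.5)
Your proposal is correct and takes the same route as the paper, which obtains this corollary simply by combining Theorem~\ref{thm:GS-even-products} for non-exceptional tori with Theorem~\ref{thm:exceptional-even-classification} for exceptional tori in dimensions $1$, $2$, and $d\ge3$, exactly as in your case split. Your check that $n_2$ odd with $n_2\ge2$ forces $n_2\ge3$ is the one translation step needed: it identifies the $d=2$ exceptional case with $C_4\square C_{4k+2}$ for $k\ge1$.
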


\begin{proof}[Proof of Theorem~\ref{thm:path-even-cycle-products}]
At a vertex of maximum degree, properness requires $\Delta(G)$ distinct colors; hence $q_B(G)\ge\Delta(G)$.

Item (1) is Lemma~\ref{lem:P2-even-cycle}. Item (2) follows from Theorem~\ref{thm:GS-path-products}, since $P_2\square C_4\square C_4\cong Q_5$.

Every path and even-cycle factor has chromatic number two, and Theorem~\ref{thm:sabidussi} gives the same value for every nontrivial product formed from them. Since all B-colorings used below have at least two colors, Lemma~\ref{lem:cross-inequality} applies whenever invoked.

We next prove the upper bounds in item (3). In (3a), select one cycle factor of $E$. Lemma~\ref{lem:P2-even-cycle} colors its product with $P_2$ using four colors, while the remaining even-cycle product has B-coloring number equal to its maximum degree because it contains no $C_4$ factor. Lemma~\ref{lem:cross-inequality} now gives $q_B(G)\le\Delta(G)+1$.

In (3b), apply the same lemma to $P_2\square C_4\square C_4\cong Q_5$ and $C_{4n+2}$ to obtain the upper bound $6+2=\Delta(G)+1$.

In (3c), Theorem~\ref{thm:GS-path-products} gives $q_B(P_2\square P_m)=4=\Delta(P_2\square P_m)+1$, while $q_B(E)=\Delta(E)$ because $E$ has no $C_4$ factor. Another application of Lemma~\ref{lem:cross-inequality} gives $\Delta(G)+1$.

It remains to prove item (4). Let $k=c_4(E)$.

First suppose that $P$ is neither $P_2$ nor $P_2\square P_m$. If $E$ is not one of the two exceptions in Corollary~\ref{cor:all-even-products}, apply Lemma~\ref{lem:cross-inequality} directly to $P$ and $E$. If $E\cong C_4$, regard $P\square C_4$ as a product of paths by replacing $C_4$ with $P_2\square P_2$; the resulting path product is not one of the exceptions in Theorem~\ref{thm:GS-path-products}. If $E\cong C_4\square C_{4n+2}$, first color $P\square C_4$ in this way and then add the remaining cycle using Lemma~\ref{lem:cross-inequality}. In each case the number of colors is $\Delta(G)$.

Suppose next that $P\cong P_2\square P_m$ and $k\ge1$. Absorb all $k$ copies of $C_4\cong P_2\square P_2$ into the path part. The resulting product $P_2^{2k+1}\square P_m$ is not a path-product exception, while the remaining even-cycle product contains no $C_4$ factor. Theorem~\ref{thm:GS-path-products} gives the required coloring if no even-cycle factor remains. Otherwise, Theorems~\ref{thm:GS-path-products} and~\ref{thm:GS-even-products}, followed by Lemma~\ref{lem:cross-inequality}, give $q_B(G)=\Delta(G)$.

It remains to consider $P\cong P_2$. The cases $t=1$, $E\cong C_4^2$, $k=0$ with $t\ge2$, and $E\cong C_4^2\square C_{4n+2}$ have already been listed. In the next two cases, $H$ is connected, has at least two edges, and Theorem~\ref{thm:sabidussi} gives $\chi(H)=2$; hence it is bipartite, as required in Lemma~\ref{lem:Q3-product}.

If $k=1$, let $H$ be the nonempty product obtained by deleting the unique $C_4$ factor, so that $G=Q_3\square H$. Since $H$ contains no $C_4$ factor, Corollary~\ref{cor:all-even-products} and Lemma~\ref{lem:Q3-product} give $q_B(G)=\Delta(G)$.

If $k=2$ and the graph is not one of the listed cases, retain one $C_4$ factor in $H$ and again express $G$ as $Q_3\square H$. The product $H$ is neither $C_4$ nor $C_4\square C_{4n+2}$, so Corollary~\ref{cor:all-even-products} and Lemma~\ref{lem:Q3-product} apply.

Finally, if $k\ge3$, absorb all $C_4$ factors into the path part, obtaining the hypercube $Q_{2k+1}$. Since $2k+1\ge7$, Theorem~\ref{thm:GS-path-products} gives $q_B(Q_{2k+1})=2k+1$. This is the desired coloring when no even-cycle factor remains. Otherwise, the remaining even-cycle product contains no $C_4$ factor, and Lemma~\ref{lem:cross-inequality} completes the proof.
\end{proof}

\section{Open problems}
Theorems~\ref{thm:torus-grids} and~\ref{thm:exceptional-even-classification} exhibit two different parity phenomena. A single odd factor forces five colors in a two-dimensional cycle product, whereas every exceptional even discrete torus attains the degree lower bound once its dimension is at least three. Corollary~\ref{cor:many-odd-upper} also gives, for every positive integer $d$ and every product $G$ of $d$ odd cycles,
\[
2d+1\le q_B(G)\le\left\lceil\frac{5d}{2}\right\rceil.
\]
The lower bound is attained under the divisibility hypothesis in Theorem~\ref{thm:GS-odd-products}. This suggests the following conjecture.

\begin{conjecture}
For all integers $d\ge1$ and $n_1,\ldots,n_d\ge1$,
\[
q_B(C_{2n_1+1}\square\cdots\square C_{2n_d+1})=2d+1.
\]
\end{conjecture}

The three interval classes in Theorem~\ref{thm:path-even-cycle-products}(3) are the remaining unresolved even $\ell$-cylindrical grids. Determining which of these grids have B-coloring number $\Delta$ and which require $\Delta+1$ is the second open problem arising from our classification.

\section*{Declaration of AI usage}
During the development and preparation of this manuscript, the authors used ChatGPT to explore possible approaches to selected parts of the mathematical arguments, and to improve the language and presentation of the paper. All AI-assisted arguments were carefully checked, revised, and independently verified by the authors. The authors take full responsibility for the correctness, originality, and final content of the manuscript.

\appendix

\section{Seam data for the odd--odd construction}
\label{app:odd-seams}
This appendix gives the finite lists used in Proposition~\ref{prop:odd-odd-upper}. For the integers $n_1,n_2\ge1$ in that proposition, set $p=2n_1+1$ and $q=2n_2+1$. Let $h^{\circ}(i,j)$ and $v^{\circ}(i,j)$ denote the replacement values assigned to $h_0(i,j)$ and $v_0(i,j)$ in the terminal rows. Each ordered pair in Table~\ref{tab:terminal-row-corrections} is $(h^{\circ}(i,j),v^{\circ}(i,j))$, and the row is chosen according to the parity of $i$.

\begingroup
\captionsetup{font=small,skip=3pt,hypcap=false}
\setlength{\tabcolsep}{5pt}
\renewcommand{\arraystretch}{0.94}

\Needspace{15\baselineskip}
\begin{center}
\captionof{table}{Corrections in the terminal rows of the periodic coloring.}
\label{tab:terminal-row-corrections}
\small
\begin{tabular}{c|c|ccc}
\toprule
$q\pmod6$ & $i\pmod2$ & \multicolumn{3}{c}{terminal rows, from left to right}\\
\midrule
1 & 0 & $(2,3)$ at $q-2$ & $(4,1)$ at $q-1$ & \\
1 & 1 & $(5,1)$ at $q-2$ & $(2,3)$ at $q-1$ & \\
\midrule
3 & 0 & $(3,1)$ at $q-3$ & $(2,5)$ at $q-2$ & $(4,1)$ at $q-1$\\
3 & 1 & $(4,5)$ at $q-3$ & $(3,1)$ at $q-2$ & $(2,3)$ at $q-1$\\
\midrule
5 & 0 & $(3,1)$ at $q-2$ & $(4,2)$ at $q-1$ & \\
5 & 1 & $(5,2)$ at $q-2$ & $(3,1)$ at $q-1$ & \\
\bottomrule
\end{tabular}
\end{center}

For the three terminal columns, recall that
\[
T_j=(H_j^0,H_j^{p-2},H_j^{p-1};V_j^0,V_j^{p-2},V_j^{p-1}).
\]
The two short cases are listed in Table~\ref{tab:short-T-lists}.

\Needspace{8\baselineskip}
\begin{center}
\captionof{table}{The terminal-column lists for $q=7$ and $q=9$.}
\label{tab:short-T-lists}
\footnotesize
\renewcommand{\arraystretch}{0.90}
\begin{tabular}{c|c|c}
\toprule
$j$ & $T_j$ for $q=7$ & $T_j$ for $q=9$\\
\midrule
0 & $(1,1,2;3,2,4)$ & $(1,1,2;3,2,4)$\\
1 & $(4,5,1;5,3,2)$ & $(4,5,1;5,3,2)$\\
2 & $(2,4,3;4,1,5)$ & $(2,4,3;4,1,5)$\\
3 & $(3,3,1;5,2,4)$ & $(3,3,1;5,2,4)$\\
4 & $(1,1,3;4,4,2)$ & $(1,1,2;4,3,5)$\\
5 & $(2,3,5;3,1,4)$ & $(2,2,1;5,1,3)$\\
6 & $(5,2,1;4,3,5)$ & $(3,4,2;4,5,1)$\\
7 & --- & $(2,3,5;3,1,4)$\\
8 & --- & $(5,2,1;4,3,5)$\\
\bottomrule
\end{tabular}
\end{center}

For $q\ge11$, let
\[
E_j=\begin{cases}5,&j\text{ even},\\4,&j\text{ odd},\end{cases}
\qquad
A=(3,1,2),\quad B=(2,3,1),\quad C=(1,2,3),
\]
where the entries of $A$, $B$, and $C$ are indexed by $\Z_3$. In every indicated middle range, set
\[
T_j=(E_j,A_{j\bmod3},B_{j\bmod3};C_{j\bmod3},B_{j\bmod3},E_j). \tag{A.1}
\]
The initial lists are given in Table~\ref{tab:long-T-initial}; the terminal lists are given in Table~\ref{tab:long-T-terminal}.

\begin{center}
\begin{minipage}{0.82\textwidth}
\centering
\captionof{table}{Initial terminal-column lists for $q\ge11$.}
\label{tab:long-T-initial}
\small
\begin{tabular}{c|c|c}
\toprule
$j$ & $q\equiv1,3\pmod6$ & $q\equiv5\pmod6$\\
\midrule
0 & $(1,1,2;3,2,4)$ & $(3,1,2;1,2,4)$\\
1 & $(4,5,1;5,3,2)$ & $(4,5,3;2,3,1)$\\
2 & $(2,4,3;4,1,5)$ & $(5,2,4;3,1,5)$\\
3 & $(3,3,1;5,2,4)$ & $(4,3,1;5,2,4)$\\
4 & $(1,1,2;4,3,5)$ & $(1,1,2;4,3,5)$\\
5 & $(2,2,1;3,1,4)$ & $(2,2,1;3,1,4)$\\
\bottomrule
\end{tabular}
\end{minipage}
\end{center}

\Needspace{14\baselineskip}
\begin{center}
\captionof{table}{Middle ranges and terminal-column lists for $q\ge11$.}
\label{tab:long-T-terminal}
\small
\begin{tabular}{c|c|c|l}
\toprule
$q\pmod6$ & middle range for (A.1) & index & $T_j$\\
\midrule
1 & $6\le j\le q-4$ & $q-3$ & $(5,1,3;4,4,2)$\\
  &                    & $q-2$ & $(2,3,5;3,1,4)$\\
  &                    & $q-1$ & $(5,2,1;4,3,5)$\\
\midrule
3 & $6\le j\le q-5$ & $q-4$ & $(4,2,1;3,5,4)$\\
  &                    & $q-3$ & $(2,1,5;1,4,2)$\\
  &                    & $q-2$ & $(4,5,3;2,1,4)$\\
  &                    & $q-1$ & $(5,2,1;4,3,5)$\\
\midrule
5 & $6\le j\le q-4$ & $q-3$ & $(5,2,1;3,4,5)$\\
  &                    & $q-2$ & $(4,1,2;1,5,4)$\\
  &                    & $q-1$ & $(5,2,3;4,3,5)$\\
\bottomrule
\end{tabular}
\end{center}

\endgroup
\FloatBarrier
\printbibliography

\end{document}